\newcommand{\seq}{\subseteq}
\newcommand{\C}{\mathbb{C}}
\newcommand{\rank}{\rm{rank }}
\newtheorem{thm}{Theorem}[section]
\newtheorem*{thm-nl}{Theorem}
\newtheorem*{prop-nl}{Proposition}
\newtheorem{lem}[thm]{Lemma}
\def\PP{{\mathbf P}}
\def\Pic0{{\rm Pic}^0(X)}
\newtheorem{cor}[thm]{Corollary}
\newtheorem*{cor-nl}{Corollary}
\newtheorem{conjecture}[thm]{Conjecture}
\newtheorem*{conjecture-nl}{Conjecture}
\newtheorem*{quest-nl}{Question}
\newtheorem*{quests-nl}{Questions}
\newtheorem{prop}[thm]{Proposition}
\theoremstyle{remark}
\title{{The Geometric Syzygy Conjecture in Even Genus}}
\date{\today}
\author[M. Kemeny]{Michael Kemeny}
\address{University of Wisconsin-Madison, Department of Mathematics, 480 Lincoln Dr
\hfill \newline\texttt{}
 \indent WI 53706, USA} \email{{\tt michael.kemeny@gmail.com}}
\begin{document}
\begin{abstract}
We prove the Geometric Syzygy Conjecture for generic canonical curves of even genus. This result extends Green's classical result \cite{green-quadrics} on the generation of the ideal of a canonical curve by rank four quadrics to the highest linear syzygy group.

\end{abstract}
\maketitle
\setcounter{section}{-1}
\section{Introduction}
Let $X$ be a projective variety with very ample line bundle $L$. The study of the equations of the embedded variety $X \hookrightarrow \PP(H^0(L)^{\vee})$ is governed by the \emph{Koszul cohomology spaces} $K_{p,q}(X,L)$, \cite{green-koszul}, defined as the middle cohomology of the sequence
$$\bigwedge^{p+1}H^0(L) \otimes H^0(L^{\otimes q-1}) \to \bigwedge^{p}H^0(L) \otimes H^0(L^{\otimes q}) \to \bigwedge^{p-1}H^0(L) \otimes H^0(L^{\otimes q+1}).$$
\medskip 

The first question one asks about the vector spaces $K_{p,q}(X,L)$ is, of course, to determine their dimension. Once the dimension is known we can try to find a basis with special properties. In the case of the \emph{linear} syzygy spaces $K_{p,1}(X,L)$, there is a well-defined notion of rank.  A natural question, with roots going back at least to Andreotti--Mayer's 1967 paper \cite{andreotti-mayer}, is whether one can find a spanning set of $K_{p,1}(X,L)$ consisting of elements of low rank. \medskip

To be more precise, the rank of a linear syzygy $\alpha \in K_{p,1}(X,L)$ is the dimension of the minimal linear subspace
$V \seq H^0(X,L)$ such that $\alpha \in K_{p,1}(X,L,V)$, i.e.\ such that $\alpha$ is represented by an element of $\bigwedge^p V \otimes H^0(L) \seq \bigwedge^p H^0(L) \otimes H^0(L).$

  \medskip

 Syzygies of low rank have geometric meaning. If $X$ is nondegenerate we have $\rm{rank}(\alpha) \geq p+1$. If there exists a syzygy $\alpha$ with $\rm{rank}(\alpha)=p+1$, then $X$ lies on a rational normal scroll. More precisely, the \emph{syzygy scheme} $$\rm{Syz}(\alpha) \seq \PP(H^0(L)^{\vee})$$ of $\alpha$, as defined by Green \cite{green-canonical}, defines a scroll. Similarly, syzygies of rank $p+2$ arise from linear sections of Grassmannians. More precisely, $\rm{Syz}(\alpha)$ contains the cone over a linear section of $\rm{Gr}_2(V^{\vee} \oplus \C)$. For proofs of these facts see \cite{bothmer-JPAA}, \cite{aprodu-nagel-nonvanishing}. \medskip
 
 Correspondingly, a syzygy $\alpha \in K_{p,1}(X,L)$ is said to be \emph{geometric} if $\rank(\alpha) \leq p+2$. Most general constructions of syzygies produce geometric syzygies. In particular, an influential construction of Green--Lazarsfeld \cite{green-koszul} produces geometric syzygies out of a decomposition $L=L_1 \otimes L_2$ with $h^0(L_i) \geq 2$ for $i=1,2$, \cite[\S 3.4.2]{aprodu-nagel}. Voisin further constructs geometric syzygies out of rank two vector bundles in her proof of Green's conjecture for curves of odd genus, \cite{V2}, \cite{aprodu-nagel-nonvanishing}.\medskip
 
 
 For general canonical curves of even genus $g=2k$, these constructions yield syzygies of rank at most $k$. Green thus conjectured that $K_{k,1}(C,\omega_C)=0$, \cite{green-koszul}, which was proved by Voisin using a study of the cohomology of vector bundles on Hilbert schemes of a K3 surface, \cite{V1}. In order to replace this vanishing conjecture with a precise geometric statement, the following folklore conjecture has been around for quite some time:
 \begin{conjecture}[The Geometric Syzygy Conjecture in Even Genus] \label{geo-conj}
 For a general curve of genus $g=2k$, the last linear syzygy space $K_{k-1,1}(C,\omega_C)$ is spanned by geometric syzygies.
 \end{conjecture}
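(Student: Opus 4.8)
The plan is to prove the statement for the general curve by specializing $C$ to a smooth curve on a K3 surface, where the geometry controlling the Clifford index becomes visible, and then to transport the question to the surface. Concretely, I would take a K3 surface $S$ with $\mathrm{Pic}(S) = \ZZ\cdot L$ and $L^2 = 2g-2$, and a general $C \in |L|$; then $\omega_C \cong L|_C$ by adjunction, $h^0(L) = g+1$, and the section vanishing on $C$ exhibits the canonical model of $C$ as the hyperplane section $S \cap \PP^{g-1}$ of $S \hookrightarrow \PP(H^0(L)^{\vee})$. From the sequence $0 \to \OO_S \to L \to \omega_C \to 0$ together with $H^1(\OO_S) = 0$ one obtains the restriction isomorphism $K_{k-1,1}(S,L) \cong K_{k-1,1}(C,\omega_C)$, and I would check that the rank of a syzygy is unchanged under it, so that geometric syzygies on $S$ map to geometric syzygies on $C$. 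It therefore suffices to produce a spanning set of syzygies of rank at most $k+1$ for $K_{k-1,1}(S,L)$.

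The candidate geometric syzygies come from rank two vector bundles, in the spirit of Voisin's odd-genus argument. On the general curve of even genus the Clifford index $k-1$ is computed by the minimal pencils $A = g^1_{k+1}$, which form a finite set (of cardinality the Catalan number $\frac{1}{k+1}\binom{2k}{k}$); this finiteness is precisely what makes the even case more delicate than the odd one. Writing $\omega_C = A \otimes B$ with $h^0(A) = 2$ and $h^0(B) = k$, the Green--Lazarsfeld construction already lands in $K_{k-1,1}(C,\omega_C)$ and produces scrollar syzygies of rank $\le k$. Each such $A$ corresponds on $S$ to a Lazarsfeld--Mukai bundle $E_A$ of rank two, and I would use these bundles to manufacture, for every pencil, a subspace $W_A \seq K_{k-1,1}(S,L)$ of geometric syzygies of rank $\le k+1$, so that the goal reduces to proving $\sum_A W_A = K_{k-1,1}(S,L)$.

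This spanning statement is the heart of the matter and the step I expect to be the main obstacle. I would approach it through the kernel-bundle presentation $K_{k-1,1}(S,L) \cong \mathrm{coker}\big(\bigwedge^{k} H^0(L) \to H^0(S, \bigwedge^{k-1} M_L \otimes L)\big)$, under which the subspaces $W_A$ correspond to sections factoring through the subsheaves of $\bigwedge^{k-1} M_L$ determined by the bundles $E_A$, and then pass to the Hilbert scheme $S^{[k]}$ to rewrite the whole group as the cohomology of a tautological sheaf, exactly as in Voisin's computation of $K_{k,1}(S,L) = 0$. Surjectivity then becomes the assertion that the incidence loci attached to the pencils generate this cohomology. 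Equivalently, using Green's duality $K_{k-1,1}(C,\omega_C)^{\vee} \cong K_{k-1,2}(C,\omega_C)$, I would show that any quadratic syzygy pairing to zero against all the $W_A$ must vanish, deriving a contradiction from the fact that the minimal pencils already account for the entire Clifford-index geometry of $C$. The remaining points---rank-preservation under restriction, the Green--Lazarsfeld nonvanishing, and the determination of $\dim W_A$ together with the overlaps $W_A \cap W_{A'}$---I expect to be comparatively routine, with essentially all the difficulty concentrated in the cohomological surjectivity on $S^{[k]}$ and in controlling the interaction of the finitely many pencils.
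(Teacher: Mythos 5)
Your reduction to a K3 surface $S$ with $\mathrm{Pic}(S)=\mathbb{Z}L$ via the Lefschetz isomorphism $K_{k-1,1}(S,L)\cong K_{k-1,1}(C,\omega_C)$, and your identification of the candidate geometric syzygies with the minimal pencils $A\in W^1_{k+1}(C)$ and their Lazarsfeld--Mukai bundles, is exactly the paper's starting point (one small correction: since $\mathrm{Pic}(S)=\mathbb{Z}L$, all the bundles $E_A$ are isomorphic to a \emph{single} rank two bundle $E$ with $h^0(E)=k+2$; the pencils correspond to two-dimensional subspaces $H^0(A)^{\vee}\subseteq H^0(E)$, and this rigidity is essential to what follows. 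Also, the rank is not literally preserved under restriction --- the span $H^0(S,L\otimes I_{Z})$ of dimension $k+1$ maps onto $H^0(\omega_C\otimes A^{-1})$ of dimension $k$ --- though this only helps you.) The problem is that everything after ``this spanning statement is the heart of the matter'' is a declaration of intent rather than an argument: you say you would pass to $S^{[k]}$, rewrite the group as cohomology of a tautological sheaf, and show that ``the incidence loci attached to the pencils generate this cohomology,'' but no mechanism is offered for that generation statement, and the closing appeal to ``the minimal pencils accounting for the entire Clifford-index geometry'' is not a proof. This is precisely the step that had been open since von Bothmer's work, so it cannot be deferred as routine.

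The paper closes this gap by a different device. For each $s\in H^0(E)$ the Ein--Lazarsfeld secant construction gives a surjection $K_{k-1,2}(S,L)\twoheadrightarrow H^1(I_{Z(s)}\otimes L)\simeq\C$, hence by duality a geometric syzygy $\alpha(s)\in\PP(K_{k-1,1}(S,L))$; relativizing over $\PP(H^0(E))$ packages these into a surjection $K_{k-1,1}(S,L)\otimes\mathcal{O}\twoheadrightarrow\mathcal{O}_{\PP(H^0(E))}(k-2)$, i.e.\ a morphism $\PP(H^0(E))\to\PP(K_{k-1,1}(S,L))$ of degree $k-2$. The spanning statement is thereby \emph{linearized}: one must show a single square matrix of size $\binom{2k-1}{k-2}$ has full rank, which the paper does by a direct K\"unneth vanishing computation on the blow-up of $S\times\PP(H^0(E))$ along the universal zero locus, combined with Voisin's dimension count $\dim K_{k-1,1}(S,L)=\binom{2k-1}{k-2}$. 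This yields $K_{k-1,1}(S,L)\simeq\mathrm{Sym}^{k-2}H^0(E)$ with the map realized as the Veronese, and the restriction of the spanning set to the pencils on a fixed general $C$ then follows from Voisin's already-established fact that the subspaces $\mathrm{Sym}^{k-2}H^0(A)^{\vee}$, $A\in W^1_{k+1}(C)$, span $\mathrm{Sym}^{k-2}H^0(E)$. If you want to pursue your own route, you would need to supply an actual proof of the surjectivity on $S^{[k]}$; as it stands, your proposal identifies the correct target but does not hit it.
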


The conjecture above was studied by von Bothmer in his thesis from 2000, \cite{bothmer-thesis} and in the unpublished preprint \cite{bothmer-preprint}. Most significantly, von Bothmer proves Conjecture \ref{geo-conj} for curves of genus $g \leq 8$ in his 2007 paper \cite{bothmer-Transactions}.\medskip

On the other end of the spectrum, Green famously proved that, for a non-hyperelliptic curve $C$ of genus $g \geq 5$, the \emph{first} linear syzygy space $K_{1,1}(C,\omega_C)$ is generated by rank two syzygies of the form $\alpha \in K_{1,1}(C,\omega_C,H^0(L))$ for $L \in W^1_{g-1}(C)$, \cite{green-quadrics}. This was previously proven for a general curve by Andreotti--Mayer, \cite{andreotti-mayer}. For such a rank two syzygy $\alpha$, the variety $\rm{Syz}(\alpha)$ is a rank four quadric, defined by the Petri map $$H^0(C,L) \otimes H^0(C,\omega_C \otimes L^{-1}) \to H^0(C,\omega_C).$$ Identifying $K_{1,1}(C,\omega_C)$ with the space $(I_{C/\PP^{g-1}})_2$ of quadrics containing $C$, this upgrades Max Noether's classical theorem stating that $\dim (I_{C/\PP^{g-1}})_2=\binom{g-2}{2}$ into the finer statement that $(I_{C/\PP^{g-1}})_2$ is generated by quadrics of rank four. For additional background and motivation, see \cite{arbarello-harris}.  \medskip

Our main result is the following more precise version of Conjecture \ref{geo-conj}, providing an analogue of Green's result for the last linear syzygy group.
\begin{thm} \label{main-cor}
Let $C$ be a general canonical curve of genus $g=2k$. Then $K_{k-1,1}(C,\omega_C)$ is generated by the rank $k$ syzygies $$\alpha \in K_{k-1,1}(C,\omega_C,H^0(\omega_C \otimes A^{-1})), \; \; A \in W^1_{k+1}(C).$$
\end{thm}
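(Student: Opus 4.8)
The plan is to show that the natural map
$$ \Phi\colon \bigoplus_{A\in W^1_{k+1}(C)} K_{k-1,1}\big(C,\omega_C,H^0(\omega_C\otimes A^{-1})\big) \longrightarrow K_{k-1,1}(C,\omega_C) $$
is surjective, its image being the span of the rank $k$ geometric syzygies in the statement; since $\rho(2k,1,k+1)=0$ the set $W^1_{k+1}(C)$ is finite, so this is a finite sum. Two inputs make surjectivity tractable. By Green's duality theorem the target is self-dual, $K_{k-1,1}(C,\omega_C)\cong K_{k-1,1}(C,\omega_C)^{\vee}$, so $\Phi$ is surjective if and only if its orthogonal complement $(\operatorname{im}\Phi)^{\perp}$ vanishes; and Voisin's theorem $K_{k,1}(C,\omega_C)=0$ identifies $K_{k-1,1}(C,\omega_C)$ as the last nonzero linear strand and fixes its dimension. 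The whole problem is thus converted into proving that no nonzero syzygy is orthogonal to every scrollar syzygy.

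I would run the computation on the symmetric product $C_k=\operatorname{Sym}^k C$, using the secant (tautological) bundles in the manner of Ein--Lazarsfeld and of Kemeny's work on syzygies of curves: a class in $K_{k-1,1}(C,\omega_C)$ is represented by a global section of an explicit tautological line bundle on $C_k$ (a twist of the determinant of the secant bundle of $\omega_C$), with the self-duality above realized by Serre duality on the $k$-dimensional variety $C_k$. The advantage of this model is that every pencil becomes visible at once: a pencil $A\in W^1_{k+1}(C)$ determines the curve $\Gamma_A\subseteq C_k$ traced by the degree $k$ subdivisors of the members of $|A|$, i.e.\ the image of $C$ under $x\mapsto \pi_A^{-1}(\pi_A(x))-x$ where $\pi_A\colon C\to\mathbb{P}^1$ is the map defined by $A$. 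The scrollar syzygies attached to $A$ are precisely the sections of the tautological bundle that are concentrated on $\Gamma_A$, so that proving the theorem amounts to showing that the sections supported on the union of the curves $\Gamma_A$ generate all of the relevant cohomology of $C_k$.

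To bring in the only transcendental input, Voisin's vanishing, I would relate the computation on $C_k$ to one on $C_{k+1}$ through the addition map $a\colon C_k\times C\to C_{k+1}$ and the associated restriction sequences; Voisin's $K_{k,1}(C,\omega_C)=0$ is a vanishing of a tautological cohomology group on $C_{k+1}$, and pushing it down along $a$ controls the cohomology on $C_k$ that we must generate. At this stage the dimension of $K_{k-1,1}(C,\omega_C)$ and the expected number of independent scrollar classes can both be read off, and one checks they agree, which is a necessary consistency test for the spanning.

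The hard part will be the final step, the vanishing $(\operatorname{im}\Phi)^{\perp}=0$. Matching dimensions is not enough, because the scrollar contributions of distinct pencils overlap heavily and one must prove that the self-duality pairing stays nondegenerate after restriction to the scrollar classes. I expect to reduce this to the assertion that a section of the tautological bundle on $C_k$ which is orthogonal to every $\Gamma_A$ must vanish, and to attack it by filtering $C_k$ along the incidence of subdivisors, thereby peeling off the loci $\Gamma_A$ and reducing inductively to Voisin's vanishing on smaller symmetric products (equivalently, one may instead realize the $\Gamma_A$ and their sections through Lazarsfeld--Mukai bundles on a K3 surface on which $C$ degenerates). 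The essential difficulty is exactly that this refines Voisin's result from a statement about the \emph{dimension} of the top linear syzygies to a statement about their \emph{geometric generation}, and carrying out this refinement is where the real work lies.
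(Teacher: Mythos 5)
There is a genuine gap: the entire content of the theorem is the spanning statement, and your proposal explicitly defers it. You set up the map $\Phi$, observe that $W^1_{k+1}(C)$ is finite, invoke duality and Voisin's theorem, and then state that ``the hard part will be the final step, the vanishing $(\operatorname{im}\Phi)^{\perp}=0$,'' offering only a heuristic plan (filtering $C_k$ along incidence loci, inducting to smaller symmetric products). Matching dimensions, as you yourself note, proves nothing here. Two further points. First, the duality you invoke is misstated: Green's duality gives $K_{k-1,1}(C,\omega_C)^{\vee}\simeq K_{k-1,2}(C,\omega_C)$, not a self-duality of $K_{k-1,1}$; the orthogonality you need is the pairing of the scrollar classes against $K_{k-1,2}$, and making that pairing explicit is where a construction is required. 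Second, the finiteness of $W^1_{k+1}(C)$, which you cite as a convenience, is actually the obstruction to your plan: the curves $\Gamma_A\subseteq C_k$ form a finite, rigid configuration, so there is no family to deform through and no evident positivity to exploit on $C_k$ alone.

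The paper resolves exactly this difficulty by leaving the curve. It works on a K3 surface $X$ with $\mathrm{Pic}(X)=\mathbb{Z}[L]$, where the minimal pencils on \emph{all} curves in $|L|$ assemble into the single projective space $\PP(H^0(E))$ of sections of the Lazarsfeld--Mukai bundle $E$. Relativizing the Ein--Lazarsfeld secant construction over $\PP(H^0(E))$ produces a surjection $K_{k-1,1}(X,L)\otimes\mathcal{O}\twoheadrightarrow\mathcal{O}(k-2)$, hence a morphism $\PP(H^0(E))\to\PP(K_{k-1,1}(X,L))$; the real work is a concrete cohomology vanishing (via the K\"unneth formula on a blow-up of $X\times\PP(H^0(E))$) showing $H^0(\psi)$ is surjective, which combined with Voisin's dimension count identifies the map as the degree $k-2$ Veronese and yields $K_{k-1,1}(X,L)\simeq\mathrm{Sym}^{k-2}H^0(E)$. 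The spanning for a fixed curve $C\in|L|$ then follows from Voisin's elementary observation that the subspaces $\mathrm{Sym}^{k-2}H^0(A)^{\vee}$, $A\in W^1_{k+1}(C)$, generate $\mathrm{Sym}^{k-2}H^0(E)$, together with the Lefschetz isomorphism $K_{k-1,1}(X,L)\simeq K_{k-1,1}(C,\omega_C)$. Your parenthetical remark about Lazarsfeld--Mukai bundles on a K3 gestures at this, but none of the substance --- the relative construction, the Veronese identification, or the vanishing that proves it --- appears in your argument.
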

\medskip
The above theorem extends Voisin's proof of Green's Conjecture \cite{V1} in precisely the same way that \cite{green-quadrics} extends Noether's Theorem.\footnote{Indeed, Green's Conjecture is equivalent to the statement that $K_{k-1,1}(C,\omega_C)$ has the lowest possible dimension $\binom{2k-1}{k-2}$. Also note that Noether's Theorem is equivalent to $K_{0,2}(C,\omega_C)=0$.} For any $\alpha \in K_{k-1,1}(C,\omega_C,H^0(\omega_C \otimes A^{-1}))$, the syzygy scheme $\rm{Syz}(\alpha)$ is the rational normal scroll $Y_A$ defined by the two-by-two minors of the matrix associated to the Petri map $H^0(A) \otimes H^0(\omega_C\otimes A^{-1}) \to H^0(\omega_C).$

\medskip

 Theorem \ref{main-cor} is proven using K3 surfaces. Let $X$ be a K3 surface with $$\rm{Pic}(X)=\mathbb{Z}[L],\; \;(L)^2=4k-2.$$ Then $X$ carries a unique, stable, rank two bundle $E$ with invariants $\det(E)=L, \chi(E)=k+2$ and $c_2(E)=k+1$, \cite{lazarsfeld-BNP}. 
For each section $s \in H^0(E)$, a construction due to Ein--Lazarsfeld produces a surjective map
 $$K_{k-1,2}(X,L) \twoheadrightarrow H^1(I_{Z(s)} \otimes L) \simeq \C,$$
 where $Z(s)$ is the zero locus of $s$, \cite{ein-lazarsfeld-asymptotic}. By Koszul duality, $K_{k-1,1}(X,L)^{\vee} \simeq K_{k-1,2}(X,L),$ \cite{green-koszul}. Thus dualizing the Ein--Lazarsfeld construction associates an element $$\alpha(s) \in \PP(K_{k-1,1}(X,L))$$
 to the section $s \in H^0(E)$, which we show is geometric of rank $k+1$. More precisely, $\alpha(s) \in K_{k-1,1}(X,L, H^0(L \otimes I_{Z(s)}))$.
 
  \begin{thm} \label{thm-vero}
 The morphism 
 \begin{align*}
\PP(H^0(E)) & \to \PP(K_{k-1,1}(X,L))\\
 s & \mapsto \alpha(s)
 \end{align*} 
 is the Veronese embedding of degree $k-2$.\medskip
 
 In particular, there is a natural isomorphism $\rm{Sym}^{k-2} H^0(X,E) \simeq K_{k-1,1}(X,L).$
 \end{thm}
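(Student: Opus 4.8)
The plan is to reduce the theorem to four assertions about the morphism $\alpha\colon B := \PP(H^0(E)) \to \PP(K_{k-1,1}(X,L))$, $s\mapsto\alpha(s)$: (a) $\alpha$ is a morphism, i.e.\ base-point free; (b) $\alpha^*\mathcal O(1)\simeq\mathcal O_B(k-2)$; (c) the linear map $\Phi\colon K_{k-1,2}(X,L)\to H^0(B,\mathcal O_B(k-2)) = \mathrm{Sym}^{k-2}H^0(E)^\vee$ induced by $\alpha$ on sections (using (b) and $K_{k-1,1}(X,L)^\vee\simeq K_{k-1,2}(X,L)$) is injective; and (d) $\dim K_{k-1,1}(X,L) = \binom{2k-1}{k-2}$. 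Granting these, Koszul duality gives $\dim K_{k-1,2}(X,L) = \dim K_{k-1,1}(X,L) = \binom{2k-1}{k-2} = \dim\mathrm{Sym}^{k-2}H^0(E)$, so the injection $\Phi$ of (c) is an isomorphism; dualizing yields the natural isomorphism $\mathrm{Sym}^{k-2}H^0(E)\simeq K_{k-1,1}(X,L)$. Since $\Phi$ is onto, the base-point-free system defining $\alpha$ is the complete system $|\mathcal O_B(k-2)|$, whence $\alpha$ is the degree $k-2$ Veronese and $\alpha(s)$ is the image of $s^{k-2}$. Assertion (d) is Voisin's theorem (Green's Conjecture) via the restriction isomorphism $K_{k-1,1}(X,L)\simeq K_{k-1,1}(C,\omega_C)$ for $C\in|L|$, and so may be assumed.

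For (a) and (b) I would globalize the Ein--Lazarsfeld construction. Let $\pi = p_B\colon X\times B\to B$, let $\mathbf s\in H^0(p_X^*E\otimes p_B^*\mathcal O_B(1))$ be the universal section and $\mathcal Z\seq X\times B$ its zero locus. The relative Koszul complex $0\to\mathcal O\xrightarrow{\mathbf s} p_X^*E\otimes p_B^*\mathcal O_B(1)\to I_{\mathcal Z}\otimes p_X^*L\otimes p_B^*\mathcal O_B(2)\to 0$, together with $H^1(E) = H^2(E) = 0$ and $H^2(\mathcal O_X)\simeq\C$, shows that $R^1\pi_*(I_{\mathcal Z}\otimes p_X^*L)$ is a line bundle (isomorphic to $\mathcal O_B(-2)$); in particular $h^1(I_{Z(s)}\otimes L) = 1$ for every $s\neq 0$, so the relative Ein--Lazarsfeld map is a surjection of sheaves onto a line bundle, which gives (a) uniformly in $s$ (including for non-reduced $Z(s)$). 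Tracking the power of $\mathbf s$ entering the construction then identifies $\alpha^*\mathcal O(1)$ with $\mathcal O_B(k-2)$, proving (b); the degree is sanity-checked at the extremes $k=2$ (the constant map onto the unique quadric, $\dim K_{1,1} = 1$) and $k=3$ (a linear isomorphism $\PP^4\xrightarrow{\sim}\PP^4$).

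Assertion (c) --- equivalently, that the rank $k+1$ geometric syzygies $\alpha(s)$ span $K_{k-1,1}(X,L)$ --- is the crux. Here I would exploit that $E$ is globally generated with $\wedge^2 H^0(E)\twoheadrightarrow H^0(L)$, giving a Plücker embedding $f\colon X\hookrightarrow\mathrm{Gr}(2,H^0(E)^\vee)$ with $f^*\mathcal O(1) = L$ and $M_E := \ker(H^0(E)\otimes\mathcal O_X\to E) = f^*\mathcal Q^\vee$, where $\mathcal Q$ is the tautological quotient bundle. Using the standard kernel bundle description $K_{k-1,1}(X,L)\simeq H^1(X,\wedge^k M_L)$, with $M_L = \ker(H^0(L)\otimes\mathcal O_X\to L)$, the strategy is to relate $M_L$ to the restriction of the Grassmannian kernel bundle and to compute the resulting cohomology as a $\mathrm{GL}(H^0(E))$-representation by Bott's theorem on $\mathrm{Gr}(2,k+2)$, obtaining $\mathrm{Sym}^{k-2}H^0(E)$; equivariance then forces the spanning classes $\alpha(s)$, which are permuted by $\mathrm{GL}(H^0(E))$, to be the pure powers $s^{k-2}$.

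The main obstacle is precisely this last identification. The difficulty is that $X$ is a proper subvariety of the Grassmannian, so $M_L$ is not simply the pullback of the Grassmannian kernel bundle: one must control the discrepancy arising from the extra Plücker quadrics $\ker(\wedge^2 H^0(E)\to H^0(L))$ and establish the vanishing statements needed to push the Bott computation from $\mathrm{Gr}(2,H^0(E)^\vee)$ down to $X$. Carrying this out rigorously --- rather than merely matching dimensions via (d) --- is what upgrades the bare dimension count into the precise assertion that $\alpha$ is the Veronese embedding and that the isomorphism $\mathrm{Sym}^{k-2}H^0(E)\simeq K_{k-1,1}(X,L)$ is natural.
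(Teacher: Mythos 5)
Your reduction of the theorem to (a)--(d) matches the paper's architecture: (a) and (b) are exactly the content of Section 3, where the Ein--Lazarsfeld construction is relativized over $\PP(H^0(E))$ using the universal zero locus $\mathcal{Z}$ of $\mathrm{id}\in H^0(E\boxtimes\mathcal{O}(1))$, yielding a surjection of sheaves $K_{k-1,1}(X,L)^{\vee}\otimes\mathcal{O}\twoheadrightarrow\mathcal{O}(k-2)$ (your computation of $R^1q_*(I_{\mathcal{Z}}\otimes p^*L)\simeq\mathcal{O}(-2)$ and the resulting degree $k-2$ agrees with Lemma \ref{deg-calc} and its sequel); (d) is Voisin's theorem, used in the same way; and (c) is equivalent, given (d), to the surjectivity of $H^0(\psi)$, which is the paper's Theorem \ref{main-thm} and the genuine content of the argument.

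The gap is in your proposed proof of (c). The route through the Pl\"ucker embedding $X\hookrightarrow\mathrm{Gr}_2(H^0(E)^{\vee})$ and Bott's theorem is precisely von Bothmer's program \cite{bothmer-preprint}, which Section \ref{outline} discusses and sets aside: the comparison map $K_{k-1,1}(\mathrm{Gr}_2(H^0(E)^{\vee}),\mathcal{O}(1))\to K_{k-1,1}(X,L)$ is a square matrix whose nonsingularity is only known for $g\le 8$, and the intersection $\mathrm{Gr}_2(H^0(E)^{\vee})\cap\PP(H^0(L)^{\vee})$ is far from transversal for $g>8$, so ``controlling the discrepancy from the extra Pl\"ucker quadrics'' is exactly the open difficulty, not a routine step. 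Your appeal to $\mathrm{GL}(H^0(E))$-equivariance also does not apply as stated: the Grassmannian is homogeneous but $X$ is not preserved by the $\mathrm{GL}(H^0(E))$-action, so $K_{k-1,1}(X,L)$ carries no a priori group action with respect to which the classes $\alpha(s)$ could be forced to be pure powers. The paper avoids all of this by proving (c) directly: it passes to the blow-up $B$ of $X\times\PP(H^0(E))$ along $\mathcal{Z}$ (so that the relative analogue $\Gamma$ of $\Sigma$ becomes locally free with $\wedge^k\Gamma\simeq I_D\otimes q'^*\mathcal{O}(k)$), resolves $\wedge^k\Gamma$ by Weyman's exact sequence involving $\mathrm{Sym}^i\mathcal{S}\otimes\wedge^{k-i}\pi^*\mathcal{M}$, and reduces the required surjectivity to a list of cohomology vanishings on $X\times\PP(H^0(E))$ that follow from the K\"unneth formula (Theorem \ref{real-thm}). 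That vanishing argument is the missing ingredient you would need to supply.
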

To avoid cluttering the introduction, we postpone giving an outline of the proof of Theorem \ref{thm-vero} to Section \ref{outline}. From Theorem \ref{thm-vero} we deduce:
 \begin{cor} \label{geo-K3}
 The Koszul group $K_{k-1,1}(X,L)$ is spanned by geometric syzygies.
 \end{cor}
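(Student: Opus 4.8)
The plan is to deduce Corollary \ref{geo-K3} directly from Theorem \ref{thm-vero}, for which the only substantive input is a classical spanning property of Veronese varieties. By Theorem \ref{thm-vero} we have the natural isomorphism $\mathrm{Sym}^{k-2} H^0(X,E) \simeq K_{k-1,1}(X,L)$, under which the map $s \mapsto \alpha(s)$ is identified with the degree $k-2$ Veronese embedding $\PP(H^0(E)) \to \PP(\mathrm{Sym}^{k-2} H^0(E))$, $[s] \mapsto [s^{k-2}]$.

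First I would recall that, by the Ein--Lazarsfeld construction preceding Theorem \ref{thm-vero}, each syzygy $\alpha(s)$ lies in $K_{k-1,1}(X,L, H^0(L \otimes I_{Z(s)}))$ and has rank $k+1 = (k-1)+2$. Hence every $\alpha(s)$ is geometric in the sense of the Introduction. It therefore suffices to show that the elements $\alpha(s)$, as $s$ ranges over $H^0(E)$, linearly span $K_{k-1,1}(X,L)$. Under the isomorphism of Theorem \ref{thm-vero} this translates into the assertion that the pure powers $\{ s^{k-2} : s \in H^0(E) \}$ span $\mathrm{Sym}^{k-2} H^0(E)$ as a $\C$-vector space, i.e. that the linear span of the degree $k-2$ Veronese image fills the ambient $\PP(\mathrm{Sym}^{k-2} H^0(E))$.

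The latter is the classical fact that, over a field of characteristic zero, the $d$-th powers of the vectors of a space $W$ span $\mathrm{Sym}^d W$; concretely, the polarization identity expresses each product $w_1 \cdots w_d$ as a $\QQ$-linear combination of powers $\bigl(\sum_{i \in S} w_i\bigr)^d$, so the powers span the monomial basis and hence all of $\mathrm{Sym}^d W$. Applying this with $W = H^0(E)$ and $d = k-2$ shows that the geometric syzygies $\alpha(s)$ span $K_{k-1,1}(X,L)$, which is the claim. I expect no genuine obstacle here: the corollary is essentially formal once Theorem \ref{thm-vero} is established, since the entire geometric content — the identification of the dualized Ein--Lazarsfeld syzygies with a Veronese embedding, and the rank computation making each $\alpha(s)$ geometric — has already been carried out in that theorem and the construction preceding it.
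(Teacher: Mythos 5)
Your spanning argument is fine and is exactly what the paper does: once $\widetilde{\psi}$ is known to be the degree $k-2$ Veronese embedding, the non-degeneracy of its image (equivalently, the polarization identity showing pure powers span $\mathrm{Sym}^{k-2}H^0(E)$ in characteristic zero) gives that the classes $\alpha(s)$ span $K_{k-1,1}(X,L)$. The gap is in the other half. You assert that "by the Ein--Lazarsfeld construction preceding Theorem \ref{thm-vero}, each syzygy $\alpha(s)$ lies in $K_{k-1,1}(X,L,H^0(L\otimes I_{Z(s)}))$ and has rank $k+1$." That statement is \emph{announced} in the introduction ("which we show is geometric of rank $k+1$"), but it is not a formal consequence of the construction: what the construction gives directly is that $\alpha(s)$ is the dual of the surjection $\phi\colon H^1(\wedge^k M_L(L))\to H^1(\wedge^k\Sigma(L))\simeq\C$, and translating "dual to a class factoring through $\wedge^k\Sigma$" into the rank bound is precisely the nontrivial content. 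Since "geometric" means rank at most $(k-1)+2=k+1$, the corollary is not proved until this is established, so your proposal has deferred the main point rather than discharged it.

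The paper carries out this step in the proof of Theorem \ref{geometric-syz}: one blows up $\pi\colon\widetilde{X}\to X$ along $Z(s)$ with exceptional divisor $D$, identifies $\phi$ with the natural map $H^1(\pi^*\wedge^kM_L(L))\to H^1(\wedge^k\gamma(L))\simeq H^1(I_D(\pi^*L))$ for $\gamma=\mathrm{Ker}(W\otimes\mathcal{O}_{\widetilde{X}}\twoheadrightarrow \pi^*L_{|_D})$, and then Serre-dualizes to exhibit $\phi^{\vee}$ as the inclusion of Koszul groups
$$K_{k-1,1}(\widetilde{X},D,\pi^*L-D)\hookrightarrow K_{k-1,1}(\widetilde{X},D,\pi^*L)\simeq K_{k-1,1}(X,L),$$
whose source is identified with $K_{k-1,1}(\widetilde{X},\pi^*L,H^0(\pi^*L-D))$. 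This bounds the rank of every class in the image of $\phi^{\vee}$ by $h^0(\pi^*L-D)=h^0(L\otimes I_{Z(s)})=k+1$. Your write-up needs either this argument or some substitute for it; without it, only "the $\alpha(s)$ span" has been shown, not "the $\alpha(s)$ are geometric and span."
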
\medskip
 Theorem \ref{main-cor} then follows from Theorem \ref{thm-vero} using the Lefschetz Hyperplane Theorem and a result of Voisin.\medskip

We work exclusively over $\C$.\medskip

\textbf{Acknowledgements} The author is supported by NSF grant DMS-1701245. I thank Daniel Huybrechts and Robert Lazarsfeld for detailed comments on a draft of this paper. I thank Gavril Farkas for explaining the paper \cite{AFPRW} to me.

\section{Strategy and Comparison with Previous Approaches} \label{outline}
In this section, we outline our strategy and compare it to approaches from \cite{V1}, \cite{bothmer-preprint}, \cite{AFPRW}. \medskip

Von Bothmer's proposed approach \cite{bothmer-preprint} to Conjecture \ref{geo-conj} combines Voisin's Theorem \cite{V1} with a representation-theoretic study of the minimal free resolution of the Grassmannian in the Pl\"ucker embedding. Let $X$ be a K3 surface with $\rm{Pic}(X)=\mathbb{Z}[L]$ for $(L)^2=4k-2$, with Lazarsfeld--Mukai bundle $E$ as in the introduction. The starting point of \cite{bothmer-preprint} is the observation that \cite{V1} implies
 $$ \dim K_{k-1,1}(X,L) = \dim K_{k-1,1}(\rm{Gr}_2(H^0(E)^{\vee}), \mathcal{O}(1))= \binom{2k-1}{k-2},$$
 suggesting a link between the minimal free resolution of the K3 surface $X$ and the Grassmannian $\rm{Gr}_2(H^0(E)^{\vee})$.
 Further, there is a natural morphism
 $$X \hookrightarrow \PP(H^0(L)^{\vee}) \seq \PP(\wedge^2 H^0(E)^{\vee}),$$
 where $\PP(H^0(L)^{\vee}) \seq \PP(\wedge^2 H^0(E)^{\vee})$ arises from the map $\wedge^2 H^0(X,E) \twoheadrightarrow H^0(X,\det E)\simeq H^0(X,L).$
 This induces a map $$K_{k-1,1}(\rm{Gr}_2(H^0(E)^{\vee}), \mathcal{O}(1)) \to K_{k-1,1}(X,L) $$
 between vector spaces of the same dimension. \medskip
 
By representation-theoretic methods \cite{weyman-book} the last linear space $K_{k-1,1}(\rm{Gr}_2(H^0(E)^{\vee}), \mathcal{O}(1))$ of the Grassmannian is known to be generated by geometric syzygies. Conjecture \ref{geo-conj} then reduces to showing that the above square matrix of size $\binom{2k-1}{k-2}$ is nonsingular. \medskip

 Von Bothmer establishes the required nonsingularity of this matrix if $g \leq 8$ using Mukai's construction, \cite{bothmer-Transactions}. For large genus, however, this seems very difficult, since the intersection
 $$\rm{Gr}_2(H^0(E)^{\vee}) \cap \PP(H^0(L)^{\vee}) \seq \PP(\wedge^2 H^0(E)^{\vee})$$
 is far from transversal if $g>8$. \medskip
 
 Our approach replaces the role of the Grassmannian with the Ein--Lazarsfeld Secant construction of syzygies, \cite{ein-lazarsfeld-asymptotic}. As explained in the introduction, this produces a morphism
  \begin{align*}
\widetilde{\psi}\; : \;\PP(H^0(E))  \to \PP(K_{k-1,1}(X,L)) 
 \end{align*} 
 corresponding to a surjection $\psi: K_{k-1,1}(X,L) \otimes \mathcal{O}_{\PP(H^0(E))} \twoheadrightarrow  \mathcal{O}_{\PP(H^0(E))}(k-2).$\medskip
 
Theorem \ref{thm-vero} then boils down to showing that the $\binom{2k-1}{k-2} \times \binom{2k-1}{k-2}$ matrix $H^0(\psi)$ has full rank. This is achieved by describing $\psi$ in terms of certain vector bundles on $\PP(H^0(E))$ arising as higher pushforwards from $B \to \PP(H^0(E))$, where $B$ is the blow-up of $X \times \PP(H^0(E))$ along the zero-locus $\mathcal{Z}$ of the universal section of $E \boxtimes \mathcal{O}(1)$. We then directly compute that the higher cohomology of the relevant bundles on $B$ vanish, which ultimately follows from the K\"unneth formula on $X \times \PP(H^0(E))$. \medskip
 
It is interesting to compare the isomorphism $\rm{Sym}^{k-2} H^0(X,E) \simeq K_{k-1,1}(X,L)$ with the Koszul module description of syzygies on the tangent developable, relating syzygy groups to (more complicated) $\rm{GL}(H^0(E))$ representations, \cite[\S 3]{eisenbud-orientation}, \cite{AFPRW}.  
 One should also compare with the isomorphism $\rm{Sym}^k H^0(X,E) \simeq \bigwedge^{k+1} H^0(X,L)$ discovered by Voisin, \cite{V1}. \medskip

We end this section with a few words on the relationship between the results of this paper and \cite{V1}. Theorem \ref{thm-vero} is stronger than Green's conjecture. Precisely, the natural isomorphism $\rm{Sym}^{k-2} H^0(X,E) \simeq K_{k-1,1}(X,L)$ implies in particular the count $\dim K_{k-1,1}(X,L)=\binom{2k-1}{k-2}$. This dimension count is equivalent to the vanishing $K_{k-2,2}(X,L)=0$ as conjectured by Green.\medskip

However, our proof, like von Bothmer's previous proposal \cite{bothmer-preprint}, relies on the result of \cite{V1} as input and so does not provide a new proof of Green's conjecture. We do, however, find a new approach to the problem. Indeed, a direct proof of injectivity of the map in Theorem \ref{real-thm} would be sufficient to provide a third proof of Green's conjecture in this case. We intend to pursue this approach in a future paper. \medskip

\section{Ein--Lazarsfeld's Secant Construction for K3 surfaces of even genus} \label{secant}
In this section, we apply the secant constructions from \cite[\S 2]{ein-lazarsfeld-asymptotic} to the case of K3 surfaces. Let $X$ be a K3 surface, $L$ an ample line bundle on $X$ with $(L)^2=2g-2$ for $g=2k\geq 4$ and $$\rm{Pic}(X) \simeq \mathbb{Z}[L].$$ A smooth $C \in |L|$ is a curve of genus $g$ and gonality $k+1$. Any line bundle $A \in W^1_{k+1}(C)$ induces a globally generated, rank two Lazarsfeld--Mukai bundle $E$, where the dual bundle $E^{\vee}$ fits into the short exact sequence
$$ 0 \to E^{\vee} \to H^0(C,A) \otimes \mathcal{O}_X \to i_*A \to 0,$$
for $i : C \hookrightarrow X$ the inclusion, \cite{lazarsfeld-BNP}. This bundle does not depend on either the choice of the curve $C$ or the line bundle $A$, and we have $\det E \simeq L$, $h^0(X,E)=k+2$. Pick a nonzero $s \in H^0(X,E)$ and let $Z:=Z(s)$ denote the zeroes of $s$. Then $Z$ is always zero-dimensional, \cite{V1}, and in fact corresponds to a $g^1_{k+1}$ on some $C' \in |L|$ ($Z$ may be a \emph{generalized} divisor, \cite{hartshorne-generalized}, when the integral curve $C'$ is singular).

We have a short exact sequence
$$0 \to \mathcal{O}_X \xrightarrow{s} E \to L \otimes I_Z \to 0.$$
We set $V=H^0(X,L)$ and define
$$W := V / H^0(L \otimes I_Z).$$
By the above short exact sequence, $W$ is a codimension $1=h^1(L\otimes I_Z)$ subset of $H^0(\mathcal{O}_Z(L))$, and, further $\dim W=k$. Define the torsion-free sheaf $\Sigma$ by the short exact sequence
$$0 \to \Sigma \to W\otimes \mathcal{O}_X \to \mathcal{O}_Z(L) \to 0,$$
where the morphism $W\otimes \mathcal{O}_X \to \mathcal{O}_Z(L)$ is the evaluation morphism. Define the kernel bundle $M_L$ by the exact sequence
$$0 \to M_L \to H^0(X,L) \otimes \mathcal{O}_X \to L \to 0.$$
There is a natural surjection $$M_L \twoheadrightarrow \Sigma$$ of sheaves. Recall that $K_{k-1,2}(X,L) \simeq H^1(X, \bigwedge^k M_L(L))$. \medskip

Following, \cite{ein-lazarsfeld-asymptotic}, one may construct syzygies in $K_{k-1,1}(X,L) \simeq {K_{k-1,2}(X,L)}^{\vee}$ by showing that the natural map
$$\phi \; : \; H^1(X, \wedge^k M_L(L)) \to H^1(X,\wedge^k \Sigma (L))$$ is surjective and, further, $H^1(X,\wedge^k \Sigma (L)) \neq 0$. \medskip

We first establish the latter:
\begin{lem} \label{lb-lemma}
We have $h^1(X,\wedge^k \Sigma (L))=1$.
\end{lem}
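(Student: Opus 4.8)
The plan is to compute $\wedge^k\Sigma$ explicitly up to torsion and thereby reduce the lemma to the computation $h^1(X,L\otimes I_Z)=1$ already recorded above. Since $\Sigma$ is torsion-free of rank $k=\dim W$, the sheaf $\wedge^k\Sigma$ is generically of rank one, so the first step is to pin down its reflexive hull. Applying $\mathcal{H}om(-,\mathcal{O}_X)$ to the defining sequence $0\to\Sigma\to W\otimes\mathcal{O}_X\to\mathcal{O}_Z(L)\to 0$ and using that $\mathcal{O}_Z(L)$ is supported in codimension two, so that $\mathcal{H}om(\mathcal{O}_Z(L),\mathcal{O}_X)=\mathcal{E}xt^1(\mathcal{O}_Z(L),\mathcal{O}_X)=0$, I obtain $\Sigma^{\vee}\cong W^{\vee}\otimes\mathcal{O}_X$, hence $\Sigma^{\vee\vee}\cong W\otimes\mathcal{O}_X$ and $\det\Sigma\cong\mathcal{O}_X$. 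The inclusion $\Sigma\hookrightarrow W\otimes\mathcal{O}_X$ then induces a canonical map $\wedge^k\Sigma\to\wedge^k(W\otimes\mathcal{O}_X)=\mathcal{O}_X$, which is exactly the map $\wedge^k\Sigma\to(\wedge^k\Sigma)^{\vee\vee}=\det\Sigma=\mathcal{O}_X$ to the reflexive hull. Its kernel is the torsion subsheaf $T\subseteq\wedge^k\Sigma$; since $\Sigma$ is locally free away from the finite set $Z$, the sheaf $T$ is supported on $Z$, and is in particular zero-dimensional.

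Next I identify the image of $\wedge^k\Sigma\to\mathcal{O}_X$ with the ideal sheaf $I_Z$. This image is the ideal generated by the maximal minors of the evaluation map, i.e. the zeroth Fitting ideal of the quotient $\mathcal{O}_Z(L)$; because $\mathcal{O}_Z(L)\cong\mathcal{O}_Z$ as $\mathcal{O}_X$-modules locally, this Fitting ideal is $I_Z$. Concretely, at a reduced point $p\in Z$ one checks, after a change of basis adapted to the nonvanishing of some section of $W$ at $p$, that $\Sigma_p\cong\mathcal{O}_{X,p}^{\,k-1}\oplus\mathfrak{m}_p$, so that the maximal minors of the generators of $\Sigma_p$ generate precisely $\mathfrak{m}_p$; the analogous local computation handles non-reduced $Z$. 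This produces a short exact sequence $0\to T\to\wedge^k\Sigma\to I_Z\to 0$ with $T$ zero-dimensional.

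Twisting by $L$ and taking cohomology then finishes the argument. Because $T$ is supported on a finite set, $H^i(X,T\otimes L)=0$ for all $i>0$, so the sequence gives an isomorphism $H^1(X,\wedge^k\Sigma(L))\cong H^1(X,L\otimes I_Z)$. The right-hand side is already available: from $0\to L\otimes I_Z\to L\to\mathcal{O}_Z(L)\to 0$, together with $H^1(X,L)=0$ and the dimension counts $h^0(L)=2k+1$, $h^0(L\otimes I_Z)=k+1$, $\operatorname{length}(Z)=k+1$, one reads off $h^1(X,L\otimes I_Z)=1$. Hence $h^1(X,\wedge^k\Sigma(L))=1$, as claimed.

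The main obstacle is the second step: identifying the image of $\wedge^k\Sigma\to\mathcal{O}_X$ as \emph{exactly} $I_Z$ rather than some thickening of it, and confirming that the kernel $T$ is genuinely zero-dimensional. Both points reduce to a local analysis of $\Sigma$ near the points of $Z$, which needs care when $Z$ is a generalized (non-reduced) divisor; the Fitting-ideal description is what makes this tractable and what pins down the precise value $h^1=1$ rather than merely nonvanishing.
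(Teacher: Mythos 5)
Your argument is correct and follows essentially the same route as the paper: both proofs rest on the surjection $\wedge^k\Sigma \twoheadrightarrow I_Z$ with kernel supported on the zero-dimensional scheme $Z$, reducing the claim to $h^1(I_Z(L))=1$ (which the paper records just before the lemma). The only difference is that the paper cites \cite[\S 3]{ein-lazarsfeld-asymptotic} for the local splitting $\Sigma_x \simeq (I_Z)_x \oplus \mathcal{O}_{X,x}^{\oplus k-1}$ and the resulting surjection, whereas you rederive these via the Fitting-ideal description of the image of $\wedge^k\Sigma \to \mathcal{O}_X$ -- a self-contained but equivalent verification.
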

\begin{proof}
For any $x \in X$, 
$$\Sigma_x \simeq (I_{Z})_x \oplus \mathcal{O}_{X,x}^{\oplus k-1},$$
see \cite[\S 3]{ein-lazarsfeld-asymptotic}. Further, we have a natural surjection $\alpha: \wedge^k \Sigma \twoheadrightarrow I_Z$
which realizes $I_Z$ as the torsion-free quotient of $\wedge^k \Sigma$. The kernel of $\alpha$ has support on the zero-dimensional scheme $Z$. Thus $h^1(\wedge^k \Sigma(L))=h^1(I_Z(L))=1$.
\end{proof}
The kernel of the surjection $M_L \twoheadrightarrow \Sigma$ is a torsion-free sheaf $N$
fitting into the short exact sequence
$$0 \to N \to H^0(I_Z(L)) \otimes \mathcal{O}_X \xrightarrow{ev} I_Z(L) \to 0.$$
\begin{lem} \label{ses-N}
We have a short exact sequence
$$0 \to N \to H^0(E) \otimes \mathcal{O}_X \to E \to 0.$$ 
Furthermore, $N^{\vee}$ is the Lazarsfeld--Mukai bundle associated to $\omega_C \otimes A^{-1}$, i.e.\ we have a short exact sequence
$$ 0 \to N \to H^0(C,\omega_C \otimes A^{-1}) \otimes \mathcal{O}_X \to i_*(\omega_C \otimes A^{-1}) \to 0.$$
\end{lem}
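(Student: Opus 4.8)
The plan is to establish Lemma~\ref{ses-N} by identifying $N$ explicitly using the two short exact sequences already in play, namely
$$0 \to \mathcal{O}_X \xrightarrow{s} E \to L \otimes I_Z \to 0$$
and
$$0 \to N \to H^0(I_Z(L)) \otimes \mathcal{O}_X \xrightarrow{ev} I_Z(L) \to 0.$$
First I would observe that twisting the first sequence shows $L \otimes I_Z$ is a quotient of $E$, and conversely that $E$ is an extension of $L \otimes I_Z$ by $\mathcal{O}_X$. The idea is to lift the evaluation map $H^0(I_Z(L)) \otimes \mathcal{O}_X \to I_Z(L) = L \otimes I_Z$ through the surjection $E \twoheadrightarrow L \otimes I_Z$, using the vanishing of the obstruction living in $H^1(\mathcal{O}_X) = 0$ (since $X$ is a K3 surface). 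Combined with the identification $H^0(E) \simeq H^0(L \otimes I_Z) \oplus \C\cdot s$ coming from $h^1(\mathcal{O}_X)=0$ applied to the first sequence (giving $h^0(E) = h^0(\mathcal{O}_X) + h^0(L\otimes I_Z) = 1 + (k+1) = k+2$, consistent with the stated $h^0(E)=k+2$), this should produce a commutative diagram relating the two sequences.

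The cleanest route is a diagram chase. I would set up the commutative diagram whose rows are the defining sequence for $N$ and the sequence $0 \to \mathcal{O}_X \to E \to L\otimes I_Z \to 0$, with the vertical map $H^0(I_Z(L))\otimes \mathcal{O}_X \to E$ given by the chosen lift of the evaluation map. The snake lemma, or a direct comparison of kernels, should then identify the kernel $N$ of $H^0(E)\otimes \mathcal{O}_X \to E$ with the $N$ defined above, after checking that the trivial summand $H^0(\mathcal{O}_X) \otimes \mathcal{O}_X = \C \cdot s \otimes \mathcal{O}_X$ maps isomorphically onto the image of $s$. This yields the first asserted sequence $0 \to N \to H^0(E) \otimes \mathcal{O}_X \to E \to 0$.

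For the second assertion, identifying $N^\vee$ as the Lazarsfeld--Mukai bundle of $\omega_C \otimes A^{-1}$, I would dualize the sequence $0 \to N \to H^0(E)\otimes \mathcal{O}_X \to E \to 0$. Since $E$ is locally free and $N$ is torsion-free, dualizing gives information about $N^\vee$, and I would compare this with the defining sequence of the Lazarsfeld--Mukai bundle of $\omega_C \otimes A^{-1}$, namely the analogue of the sequence defining $E^\vee$ but with the residual pencil. The key input is Serre duality on $X$ together with the relation between $E$ and its Serre-dual/elementary-transform partner: on a K3 surface, the Lazarsfeld--Mukai bundles attached to $A$ and to $\omega_C \otimes A^{-1}$ on a curve $C \in |L|$ are interchanged by dualizing and twisting, since $\det E \simeq L$ and $\omega_X \simeq \mathcal{O}_X$. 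I expect the numerology to check out: $h^0(\omega_C\otimes A^{-1}) = g - (k+1) + \dim W^1 \cdot(\cdots)$, which by Riemann--Roch equals $k+1$, matching $h^0(N^\vee)$.

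The main obstacle, I expect, will be verifying that the lift of the evaluation map exists and fits into a genuinely commutative diagram compatible with the section $s$ — in particular, confirming that the induced map on global sections realizes $H^0(E) \simeq H^0(L\otimes I_Z) \oplus \C$ correctly and that the trivial summand is accounted for without overcounting. The dualization step for the second sequence is the other delicate point: one must ensure that taking duals does not introduce extension or torsion discrepancies, which is controlled by the local structure $\Sigma_x \simeq (I_Z)_x \oplus \mathcal{O}_{X,x}^{\oplus k-1}$ recorded in the proof of Lemma~\ref{lb-lemma}. Once these compatibilities are pinned down, identifying $N^\vee$ with the stated Lazarsfeld--Mukai bundle follows from its characterizing exact sequence on $X$.
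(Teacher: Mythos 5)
Your treatment of the first exact sequence is correct and is essentially the paper's argument: one compares the evaluation map $H^0(E)\otimes\mathcal{O}_X \to E$ with the sequence $0 \to \mathcal{O}_X \xrightarrow{s} E \to L \otimes I_Z \to 0$, uses $h^1(\mathcal{O}_X)=0$ to see that $H^0(E) \to H^0(L\otimes I_Z)$ is surjective with kernel $\C\cdot s$, and applies the snake lemma to identify $\ker\left(H^0(E)\otimes\mathcal{O}_X\to E\right)$ with $N=\ker\left(H^0(I_Z(L))\otimes\mathcal{O}_X\to I_Z(L)\right)$.

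The second half has a genuine gap. The paper proves $N \simeq \ker\left(H^0(\omega_C\otimes A^{-1})\otimes\mathcal{O}_X \to i_*(\omega_C\otimes A^{-1})\right)$ by a second application of the snake lemma, this time comparing $H^0(E)\otimes\mathcal{O}_X \to E$ with the dual presentation of the Lazarsfeld--Mukai bundle,
$$0 \to H^0(A)^{\vee}\otimes\mathcal{O}_X \to E \to i_*(\omega_C\otimes A^{-1}) \to 0,$$
which is the essential input from \cite{lazarsfeld-BNP} and which your argument never uses. In its place you invoke the assertion that the Lazarsfeld--Mukai bundles attached to $A$ and to $\omega_C\otimes A^{-1}$ are ``interchanged by dualizing and twisting''; this is false as stated, already on rank grounds: the first has rank $h^0(A)=2$ while the second has rank $h^0(\omega_C\otimes A^{-1})=h^1(A)=k$, so no twist of one can be the dual of the other once $k>2$. (Relatedly, your count $h^0(\omega_C\otimes A^{-1})=k+1$ is off by one; Riemann--Roch gives $k$, which is forced anyway since the middle term of the asserted sequence must have rank equal to $\mathrm{rank}\, N=k$.) Dualizing $0\to N\to H^0(E)\otimes\mathcal{O}_X\to E\to 0$ does give $0\to E^{\vee}\to H^0(E)^{\vee}\otimes\mathcal{O}_X\to N^{\vee}\to 0$, but to pass from this to the stated presentation of $N^{\vee}$ you would still need to feed in the displayed sequence above (or its dual) and run a snake-lemma comparison; without that, ``compare with the defining sequence of the Lazarsfeld--Mukai bundle'' does not close the argument.
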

\begin{proof}
The first claim follows immediately from the sequence $0 \to \mathcal{O}_X \xrightarrow{s} E \to L \otimes I_Z \to 0$ and the snake lemma. The second claim now follows similarly from the short exact sequence
$$0 \to H^0(A)^{\vee} \otimes \mathcal{O}_X \to E \to i_*(\omega_C \otimes A^{-1}) \to 0,$$
 see \cite{lazarsfeld-BNP}.
\end{proof}
In particular, $N$ is locally free and stable, \cite[\S 9.3]{huybrechts-k3}.
The short exact sequence $0 \to N \to M_L \to \Sigma \to 0$ produces an exact sequence
$$N \otimes \wedge^{k-1}M_L \xrightarrow{g_2} \wedge^kM_L \xrightarrow{g_1} \wedge^k \Sigma \to 0,$$
\cite[Ch.\ 3, \S 7.2]{bourbaki-algebra}. 
\begin{prop} \label{phi-surj}
The natural map
$$\phi \; : \; H^1(X, \wedge^k M_L(L)) \to H^1(X,\wedge^k \Sigma (L))$$ is surjective.
\end{prop}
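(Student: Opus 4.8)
The plan is to deduce the surjectivity of $\phi$ from a single cohomology vanishing on $X$, and then to establish that vanishing by a slope computation; in particular no input from \cite{V1} is needed at this stage. (Note that since $h^1(\wedge^k\Sigma(L))=1$ by Lemma \ref{lb-lemma}, surjectivity of $\phi$ is the same as $\phi\neq 0$, but the argument below yields full surjectivity directly.)

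First I would split the three-term exact sequence preceding the statement. Setting $R:=\ker(g_1)=\operatorname{im}(g_2)$, it decomposes into a short exact sequence $0\to R\to \wedge^k M_L\to \wedge^k\Sigma\to 0$ together with a surjection $N\otimes\wedge^{k-1}M_L\twoheadrightarrow R$. Twisting the short exact sequence by $L$ and taking the long exact cohomology sequence, the map $\phi$ is the induced map $H^1(\wedge^kM_L(L))\to H^1(\wedge^k\Sigma(L))$, and $\operatorname{coker}(\phi)$ injects into $H^2(X,R(L))$. Hence $\phi$ is surjective as soon as $H^2(X,R(L))=0$. Since $R$ is a quotient of $N\otimes\wedge^{k-1}M_L$ and $X$ is a surface (so $H^3=0$), the group $H^2(X,R(L))$ is itself a quotient of $H^2(X,N\otimes\wedge^{k-1}M_L(L))$. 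Thus it suffices to prove
$$H^2\big(X,\;N\otimes\wedge^{k-1}M_L(L)\big)=0.$$

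Next I would rewrite this by Serre duality. The bundle $M_L$ has rank $2k$ and $\det M_L\simeq L^{-1}$, so $\wedge^{k-1}M_L^{\vee}\simeq \wedge^{k+1}M_L\otimes L$. As $\omega_X\simeq\mathcal{O}_X$, this gives $(N\otimes\wedge^{k-1}M_L(L))^{\vee}\otimes\omega_X\simeq N^{\vee}\otimes\wedge^{k+1}M_L$, so that
$$H^2\big(X,\;N\otimes\wedge^{k-1}M_L(L)\big)\simeq H^0\big(X,\;N^{\vee}\otimes\wedge^{k+1}M_L\big)^{\vee}.$$
Everything therefore reduces to the vanishing $H^0(X,\,N^{\vee}\otimes\wedge^{k+1}M_L)=0$, which I would prove by slope-semistability with respect to $L$. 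The kernel bundle $M_L$ is slope-semistable on the Picard-rank-one K3 surface, so $\wedge^{k+1}M_L$ is semistable; $N^{\vee}$ is stable (as $N$ is stable); and since tensor products of semistable sheaves are semistable in characteristic zero, $N^{\vee}\otimes\wedge^{k+1}M_L$ is semistable. Using $c_1(N^{\vee})=L$, $c_1(M_L)=-L$ and $(L)^2=4k-2$, its slope is
$$\mu_L\big(N^{\vee}\otimes\wedge^{k+1}M_L\big)=\mu_L(N^{\vee})+(k+1)\,\mu_L(M_L)=(2k-1)-\frac{(k+1)(2k-1)}{k}=-\frac{2k-1}{k}<0.$$
A semistable sheaf of negative slope has no nonzero global sections — a section would saturate to a rank-one subsheaf $\mathcal{O}_X(D)$ with $D$ effective, hence of slope $\geq 0$, contradicting semistability — so $H^0(X,\,N^{\vee}\otimes\wedge^{k+1}M_L)=0$ and the proof is complete.

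The main obstacle is the semistability input feeding into the last step: one must know that the kernel bundle $M_L$ is slope-semistable with respect to $L$ on the Picard-rank-one K3 surface, and one must invoke the preservation of semistability under exterior powers and under tensor products over $\mathbb{C}$. Granting these, the slope calculation is the decisive point, and the strict negativity $-(2k-1)/k<0$ (which holds for every $k\geq 2$) is precisely what forces the vanishing and hence the surjectivity of $\phi$.
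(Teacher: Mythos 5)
Your argument is correct and follows essentially the same route as the paper's proof: split the three-term sequence at $\ker(g_1)$, reduce surjectivity to $H^2(X,N\otimes\wedge^{k-1}M_L(L))=0$, Serre-dualize to $H^0(X,\wedge^{k+1}M_L\otimes N^{\vee})$, and kill that by semistability of the tensor product together with negativity of the slope. The only blemish is arithmetic: since $N$ has rank $k$, one has $\mu_L(N^{\vee})=(4k-2)/k$ rather than $2k-1$, so the slope of $N^{\vee}\otimes\wedge^{k+1}M_L$ is $-(2k-1)(k-1)/k$ — still negative for $k\geq 2$, so the conclusion stands.
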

\begin{proof}
Set $K: \rm{Ker}( g_1: \wedge^kM_L \xrightarrow{g_1} \wedge^k \Sigma)$. It suffices to show $H^2(K(L))=0$. From the short exact sequence
$$0 \to \rm{Ker}(g_2) \to N \otimes \wedge^{k-1}M_L \to K \to 0,$$
and the fact that $\dim X=2$, it suffices to show $H^2(X, N \otimes \wedge^{k-1}M_L(L))=0$, which is Serre dual to $$H^0(X,\bigwedge^{k-1}M^{\vee}_L(-L) \otimes N^{\vee}) \simeq H^0(X,\bigwedge^{k+1}M_L \otimes N^{\vee}).$$
Since $\bigwedge^{k+1}M_L \otimes N^{\vee}$ has slope
$$(k+1)\mu(M_L)-\mu(N)=\frac{(k+1)(2-4k)}{2k}-\frac{2-4k}{k}<0,$$
it suffices to show that $\bigwedge^{k+1}M_L \otimes N^{\vee}$ is stable. But $M_L$ is stable, \cite[\S 9.3]{huybrechts-k3}, as is $N$, so this follows from \cite[Ch.\ 3]{huybrechts-lehn}.
\end{proof}

\section{Relativizing the secant construction}
We now relative the above construction. We work on $\PP(H^0(E)):=\rm{Proj}(H^0(E)^{\vee})$, with our conventions such that $H^0(\mathcal{O}_{\PP(H^0(E))}(1))=H^0(X,E)^{\vee}$.
Let $p: X \times \PP(H^0(E)) \to X$ and $q: X \times \PP(H^0(E)) \to \PP(H^0(E))$ denote the projections. On $X \times \PP(H^0(E))$ we consider the rank two bundle $E \boxtimes \mathcal{O}(1)$. We have 
$$H^0(E \boxtimes \mathcal{O}(1))\simeq H^0(E)\otimes H^0(E)^{\vee}\simeq \rm{Hom}(H^0(E),H^0(E)).$$
Define $\mathcal{L}:=\det(E \boxtimes \mathcal{O}(1))\simeq L \boxtimes \mathcal{O}(2)$.
Thus the identity $id \in \rm{Hom}(H^0(E),H^0(E))$ induces a natural short exact sequence
$$0 \to \mathcal{O}_{X \times \PP(H^0(E))} \to E \boxtimes \mathcal{O}(1) \to I_{\mathcal{Z}} \otimes \mathcal{L} \to 0,$$
where $\mathcal{Z}$ is defined to be the zero locus of $id \in H^0(E \boxtimes \mathcal{O}(1))$. Twist the above sequence by $q^*\mathcal{O}_{\PP(H^0(E))}(-2)$ to obtain
$$0 \to q^*\mathcal{O}(-2) \to E\boxtimes \mathcal{O}(-1) \to I_{\mathcal{Z}} \otimes p^*L \to 0.$$
Note that $q_*( I_{\mathcal{Z}} \otimes p^*L)$ is a vector bundle of rank $k+1$ on $\PP(H^0(E))$.\medskip

\begin{lem} \label{deg-calc}
We have $\deg q_*( I_{\mathcal{Z}} \otimes p^*L)=-k$ and $\deg R^1 q_*( I_{\mathcal{Z}} \otimes p^*L)=-2 $.
\end{lem}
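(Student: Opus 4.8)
The plan is to apply $Rq_*$ to the short exact sequence
$$0 \to q^*\mathcal{O}(-2) \to E\boxtimes \mathcal{O}(-1) \to I_{\mathcal{Z}} \otimes p^*L \to 0$$
and to read off the two pushforward sheaves $q_*(I_{\mathcal{Z}}\otimes p^*L)$ and $R^1q_*(I_{\mathcal{Z}}\otimes p^*L)$ from the resulting long exact sequence, whose outer terms I compute via the projection formula together with the K\"unneth decomposition on the product $X \times \PP(H^0(E))$.

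First I would record the relative cohomology of the two left-hand bundles. Since $q^*\mathcal{O}(-2)=p^*\mathcal{O}_X \otimes q^*\mathcal{O}(-2)$ and $E\boxtimes\mathcal{O}(-1)=p^*E\otimes q^*\mathcal{O}(-1)$, the projection formula and flat base change give $R^iq_*(q^*\mathcal{O}(-2))\simeq H^i(X,\mathcal{O}_X)\otimes \mathcal{O}(-2)$ and $R^iq_*(E\boxtimes\mathcal{O}(-1))\simeq H^i(X,E)\otimes\mathcal{O}(-1)$. For a K3 surface $H^\bullet(X,\mathcal{O}_X)=(\C,0,\C)$, so the first pushforwards are $\mathcal{O}(-2),\,0,\,\mathcal{O}(-2)$ in cohomological degrees $0,1,2$. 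For the Lazarsfeld--Mukai bundle $E$ I would use $h^0(E)=\chi(E)=k+2$ together with $H^1(X,E)=H^2(X,E)=0$; the vanishing $H^2(X,E)\simeq H^0(X,E^\vee)^\vee=0$ follows from stability of $E$, since $E^\vee$ is stable of negative slope $\mu(E^\vee)=-\tfrac12(L)^2<0$ and hence has no sections, and then $h^1(E)=h^0(E)-\chi(E)+h^2(E)=0$. Thus the pushforwards of $E\boxtimes\mathcal{O}(-1)$ are $H^0(E)\otimes\mathcal{O}(-1)\simeq\mathcal{O}(-1)^{\oplus(k+2)}$ in degree $0$ and vanish in degrees $1,2$.

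Feeding these into the long exact sequence of $Rq_*$ yields, on the one hand,
$$0 \to \mathcal{O}(-2) \to \mathcal{O}(-1)^{\oplus(k+2)} \to q_*(I_{\mathcal{Z}}\otimes p^*L) \to 0,$$
so that $q_*(I_{\mathcal{Z}}\otimes p^*L)$ is the cokernel of $\mathcal{O}(-2)\hookrightarrow\mathcal{O}(-1)^{\oplus(k+2)}$, a rank $k+1$ bundle (as already noted) of degree $(k+2)\cdot(-1)-(-2)=-k$ by additivity of $c_1$. On the other hand, the vanishing of $R^1q_*$ and $R^2q_*$ of $E\boxtimes\mathcal{O}(-1)$ together with $R^1q_*(q^*\mathcal{O}(-2))=0$ and $R^2q_*(q^*\mathcal{O}(-2))\simeq\mathcal{O}(-2)$ forces the connecting segment $0 \to R^1q_*(I_{\mathcal{Z}}\otimes p^*L)\to \mathcal{O}(-2)\to 0$, i.e.\ an isomorphism $R^1q_*(I_{\mathcal{Z}}\otimes p^*L)\simeq\mathcal{O}(-2)$, whence $\deg R^1q_*(I_{\mathcal{Z}}\otimes p^*L)=-2$.

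The only genuinely non-formal ingredient is the cohomology vanishing $H^1(X,E)=H^2(X,E)=0$, which is where stability of $E$ enters; the rest is bookkeeping with the long exact sequence and Chern class additivity. I would also make explicit that cohomology and base change applies, so that each $R^iq_*$ of a box product is the asserted locally free sheaf on $\PP(H^0(E))$ — this is immediate here since the fibrewise cohomology of both $\mathcal{O}_X$ and $E$ has locally constant dimension over $\PP(H^0(E))$.
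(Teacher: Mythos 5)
Your proof is correct and follows essentially the same route as the paper: both apply $Rq_*$ to the sequence $0 \to q^*\mathcal{O}(-2) \to E\boxtimes \mathcal{O}(-1) \to I_{\mathcal{Z}} \otimes p^*L \to 0$, read off $q_*(I_{\mathcal{Z}}\otimes p^*L)$ as the cokernel of $\mathcal{O}(-2)\hookrightarrow H^0(E)\otimes\mathcal{O}(-1)$, and identify $R^1q_*(I_{\mathcal{Z}}\otimes p^*L)$ with $R^2q_*\mathcal{O}_{X\times\PP}\otimes\mathcal{O}(-2)\simeq\mathcal{O}(-2)$. You merely make explicit the inputs the paper leaves implicit ($H^1(X,E)=H^2(X,E)=0$ and base change), which is fine.
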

\begin{proof}
Applying $q_*$ we get the short exact sequence 
$$0 \to \mathcal{O}_{\PP(H^0(E))}(-2) \to H^0(E) \otimes \mathcal{O}_{\PP(H^0(E))}(-1) \to q_*( I_{\mathcal{Z}} \otimes p^*L) \to 0,$$
and the first claim follows. The second claim follows from the isomorphism $$R^1q_*( I_{\mathcal{Z}} \otimes p^*L) \simeq R^2q_*\mathcal{O}_{X \times \PP(H^0(E))} \otimes \mathcal{O}_{ \PP(H^0(E))}(-2),$$
combined with the fact that $R^2q_*\mathcal{O}_{X \times \PP(H^0(E))}  \simeq \mathcal{O}_{ \PP(H^0(E))}$ by relative duality.
\end{proof}
There is a natural, surjective morphism $$q^*q_*(p^*L) \twoheadrightarrow p^*L,$$
which restricts on fibres of $q$ to the evaluation map $H^0(L) \otimes \mathcal{O}_X \twoheadrightarrow L$. We define $$\mathcal{M}:=\rm{Ker}(q^*q_*(p^*L) \twoheadrightarrow p^*L).$$
Observe that $\mathcal{M} \simeq p^* M_L.$ Next, define
$$\mathcal{W}:=\rm{Coker}(q_*( I_{\mathcal{Z}}\otimes p^*L) \hookrightarrow q_*p^*L),$$
which is a rank $k$ vector bundle on $\PP(H^0(E))$ fitting into a short exact sequence
$$0 \to \mathcal{W} \to q_*( {p^*L}_{|_{\mathcal{Z}}}) \to R^1q_*(p^*L\otimes I_{\mathcal{Z}}) \to 0.$$
We have $\det(\mathcal{W})\simeq \mathcal{O}_{\PP(H^0(E))}(k)$ by Lemma \ref{deg-calc}. \medskip

Applying $q^*$, we have a morphism $q^*\mathcal{W} \to q^*q_*( {p^*L}_{|_{\mathcal{Z}}})$ which we compose with the natural map $q^*q_*( {p^*L}_{|_{\mathcal{Z}}}) \to p^*L_{|_{\mathcal{Z}}}$ to obtain a surjective morphism 
$$q^*\mathcal{W} \twoheadrightarrow p^*L_{|_{\mathcal{Z}}},$$
coinciding with $W \otimes \mathcal{O}_X \twoheadrightarrow \mathcal{O}_Z(L)$ on fibres of $q$. Define
$$\mathbf{\Sigma}:=\rm{Ker}(q^*\mathcal{W} \twoheadrightarrow p^*L_{|_{\mathcal{Z}}}).$$
\begin{lem}
We have a natural surjection 
$$\wedge^k \mathbf{\Sigma} \to I_{\mathcal{Z}} \otimes q^*\mathcal{O}(k).$$
\end{lem}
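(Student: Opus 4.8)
The plan is to mirror, now relatively over $\PP(H^0(E))$, the stalkwise computation used in the proof of Lemma \ref{lb-lemma}. The natural map comes from the defining inclusion $\mathbf{\Sigma} \hookrightarrow q^*\mathcal{W}$: since the cokernel $p^*L_{|_{\mathcal{Z}}}$ is supported on $\mathcal{Z}$, which has codimension $2$, both $\mathbf{\Sigma}$ and $q^*\mathcal{W}$ have rank $k$, so taking top exterior powers yields
$$\wedge^k \mathbf{\Sigma} \to \wedge^k q^*\mathcal{W} = \det(q^*\mathcal{W}) \simeq q^*\mathcal{O}(k),$$
where the last isomorphism is Lemma \ref{deg-calc}. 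I would define the asserted map to be the corestriction of this onto its image, so the entire content is to identify that image with $I_{\mathcal{Z}} \otimes q^*\mathcal{O}(k)$.

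First I would record that $\mathcal{Z}$ is a local complete intersection of codimension $2$. Indeed $\mathcal{Z}$ is the zero locus of the section $id$ of the rank two bundle $E\boxtimes\mathcal{O}(1)$ on the smooth variety $X \times \PP(H^0(E))$, and since the fibres $Z(s)$ of $q|_{\mathcal{Z}}$ are zero-dimensional, $\mathcal{Z}$ has dimension $k+1$, i.e.\ exactly the expected codimension $2$. Hence $id$ is a regular section and $\mathcal{Z}$ is cut out locally by a regular sequence $(f,g)$.

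Next comes the local identification, which is the heart of the argument. Localizing at a point $P \in \mathcal{Z}$ and trivializing, the surjection $q^*\mathcal{W} \twoheadrightarrow p^*L_{|_{\mathcal{Z}}}$ becomes a surjection $\mathcal{O}_P^{k} \twoheadrightarrow \mathcal{O}_P/(f,g)$ of $\mathcal{O}_P$-modules, using that $p^*L_{|_{\mathcal{Z}}}$ is a line bundle on $\mathcal{Z}$ and hence locally $\simeq \mathcal{O}_{\mathcal{Z}}$. Surjectivity forces a unimodular coefficient vector, so after an $\mathcal{O}_P$-linear change of basis the map is projection to the first factor followed by reduction mod $(f,g)$; consequently
$$\mathbf{\Sigma}_P \simeq (f,g) \oplus \mathcal{O}_P^{k-1} = (I_{\mathcal{Z}})_P \oplus \mathcal{O}_P^{k-1},$$
exactly the analogue of the stalk description $\Sigma_x \simeq (I_Z)_x \oplus \mathcal{O}_{X,x}^{\oplus k-1}$ from Lemma \ref{lb-lemma}. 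I would then expand $\wedge^k\mathbf{\Sigma}_P$ via the splitting of the exterior power of a direct sum: the only summand with nonzero image in $\wedge^k q^*\mathcal{W}_P = \mathcal{O}_P$ is $(I_{\mathcal{Z}})_P \otimes \wedge^{k-1}\mathcal{O}_P^{k-1}$, mapping isomorphically onto the inclusion $(I_{\mathcal{Z}})_P \hookrightarrow \mathcal{O}_P$, while the remaining summands involve $\wedge^{\geq 2}(I_{\mathcal{Z}})_P$ and die in $\wedge^{\geq 2}\mathcal{O}_P = 0$. Thus the image is $(I_{\mathcal{Z}})_P$ and the torsion of $\wedge^k \mathbf{\Sigma}$ is annihilated; globalizing, the image sheaf is precisely $I_{\mathcal{Z}}\otimes q^*\mathcal{O}(k)$.

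The main obstacle, such as it is, is ensuring the image is exactly $I_{\mathcal{Z}}$ rather than some thickening: this rests on the codimension-$2$ local complete intersection property of $\mathcal{Z}$, which makes $(f,g)$ a regular sequence and hence forces the zeroth Fitting ideal of $\mathcal{O}_{\mathcal{Z}}$ to equal $I_{\mathcal{Z}}$ — equivalently, it guarantees that no spurious torsion from the higher exterior powers $\wedge^{\geq 2} I_{\mathcal{Z}}$ contributes to the image. Away from $\mathcal{Z}$ the map $\wedge^k\mathbf{\Sigma} \to q^*\mathcal{O}(k)$ is an isomorphism, so the construction indeed yields a surjection onto $I_{\mathcal{Z}}\otimes q^*\mathcal{O}(k)$.
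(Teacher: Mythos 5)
Your argument is correct and follows the same route as the paper: the paper defines the map exactly as you do, via $\wedge^k \mathbf{\Sigma} \to \wedge^k q^*\mathcal{W} \simeq q^*\mathcal{O}(k)$, and then simply cites \cite[Corollary 3.7]{ein-lazarsfeld-asymptotic} for the identification of the image. Your local computation (the stalkwise splitting $\mathbf{\Sigma}_P \simeq (I_{\mathcal{Z}})_P \oplus \mathcal{O}_P^{k-1}$ at points of the codimension-two l.c.i.\ locus $\mathcal{Z}$, and the vanishing of the $\wedge^{\geq 2}$ contributions) is precisely the content of that reference, so you have just made the cited step self-contained.
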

\begin{proof}
We have a natural map $\wedge^k \mathbf{\Sigma}  \to \wedge^k q^* \mathcal{W} \simeq q^*\mathcal{O}(k)$. The remainder of the proof follows as in \cite[Corollary 3.7]{ein-lazarsfeld-asymptotic}.
\end{proof}

We have a commutative diagram
$$\begin{tikzcd}
0 \arrow[r] & \mathcal{M} \arrow[r] \arrow[d] & q^*q_*(p^*L)  \arrow[r] \arrow[d] &p^*L \arrow[r] \arrow[d] &0\\
0 \arrow[r] & \mathbf{\Sigma} \arrow[r]  & q^*\mathcal{W}  \arrow[r]  &  p^*L_{|_{\mathcal{Z}}} \arrow[r] & 0
\end{tikzcd}$$
We can now globalize the map $\phi \; : \; H^1(X, \wedge^k M_L(L)) \to H^1(X,\wedge^k \Sigma (L))$, whose definition depended on a choice of $s \in H^0(E)$, to the natural morphism
$$\psi : H^1(X,\wedge^kM_L (L)) \otimes \mathcal{O}_{\PP(H^0(E))} \simeq R^1q_*(\bigwedge^k \mathcal{M} (p^*L)) \twoheadrightarrow R^1q_*(\bigwedge^k \mathbf{\Sigma} (p^*L)) .$$ Note that this morphism is indeed surjective, by Nakayama's Lemma and Proposition \ref{phi-surj}. 
\begin{lem} The sheaf $R^1q_*(\bigwedge^k \mathbf{\Sigma} (p^*L)) $ is a line bundle on $\PP(H^0(E))$ of degree $k-2$.
\end{lem}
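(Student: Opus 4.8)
The plan is to avoid a direct cohomology-and-base-change computation and instead exploit the surjection $\wedge^k \mathbf{\Sigma} \twoheadrightarrow I_{\mathcal{Z}} \otimes q^*\mathcal{O}(k)$ furnished by the preceding lemma, reducing the whole question to the sheaf $R^1 q_*(I_{\mathcal{Z}} \otimes p^*L)$ already computed in Lemma \ref{deg-calc}. First I would identify the kernel $T$ of this surjection. Away from $\mathcal{Z}$ the target $p^*L|_{\mathcal{Z}}$ of the evaluation map $q^*\mathcal{W} \to p^*L|_{\mathcal{Z}}$ is zero, so there $\mathbf{\Sigma}$ coincides with the rank $k$ locally free sheaf $q^*\mathcal{W}$ and $\wedge^k \mathbf{\Sigma} = q^*\mathcal{O}(k)$; since $I_{\mathcal{Z}} = \mathcal{O}$ off $\mathcal{Z}$ as well, the surjection restricts to the identity on the complement of $\mathcal{Z}$. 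Hence $T$ is supported on $\mathcal{Z}$.

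Next I would push the twisted sequence forward. As each zero-locus $Z(s)$ is zero-dimensional, $q$ restricts to a finite morphism $\mathcal{Z} \to \PP(H^0(E))$, so $R^i q_*$ kills every sheaf supported on $\mathcal{Z}$ for $i \geq 1$; in particular $R^1 q_*(T(p^*L)) = R^2 q_*(T(p^*L)) = 0$. Twisting $0 \to T \to \wedge^k \mathbf{\Sigma} \to I_{\mathcal{Z}} \otimes q^*\mathcal{O}(k) \to 0$ by $p^*L$ and taking the associated long exact sequence of higher direct images then yields a natural isomorphism
$$R^1 q_*\bigl(\wedge^k \mathbf{\Sigma}(p^*L)\bigr) \simeq R^1 q_*\bigl(I_{\mathcal{Z}} \otimes p^*L \otimes q^*\mathcal{O}(k)\bigr).$$

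Finally, the projection formula rewrites the right-hand side as $R^1 q_*(I_{\mathcal{Z}} \otimes p^*L) \otimes \mathcal{O}(k)$, and Lemma \ref{deg-calc} identifies $R^1 q_*(I_{\mathcal{Z}} \otimes p^*L)$ with the line bundle $\mathcal{O}(-2)$. Thus $R^1 q_*(\wedge^k \mathbf{\Sigma}(p^*L)) \simeq \mathcal{O}(k-2)$, which simultaneously shows it is a line bundle and that its degree is $k-2$. The step I expect to require the most care is the first one: one must check that $\mathbf{\Sigma}$ is locally free of rank $k$ exactly off $\mathcal{Z}$ and that the lemma's map is literally the identity there, so that $T$ is supported on a scheme finite over the base. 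Once the support of $T$ is controlled, the vanishing of its higher direct images and the reduction to Lemma \ref{deg-calc} are entirely formal.
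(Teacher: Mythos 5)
Your proof is correct and follows essentially the same route as the paper: both reduce the computation to $R^1q_*(I_{\mathcal{Z}}\otimes p^*L)\otimes\mathcal{O}(k)$ via the canonical surjection $\wedge^k\mathbf{\Sigma}\twoheadrightarrow I_{\mathcal{Z}}\otimes q^*\mathcal{O}(k)$ and then invoke Lemma \ref{deg-calc}. The only difference is cosmetic: the paper cites Lemma \ref{lb-lemma} separately for the line-bundle statement, whereas you obtain it as a byproduct of the isomorphism with $\mathcal{O}(k-2)$, and you make explicit the point the paper leaves implicit, namely that the kernel of the surjection is supported on $\mathcal{Z}$, which is finite over $\PP(H^0(E))$, so its higher direct images vanish.
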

\begin{proof}
The sheaf $R^1q_*(\bigwedge^k \mathbf{\Sigma} (p^*L)) $ is a line bundle by Lemma \ref{lb-lemma}. Further, the canonical surjection, $\bigwedge^k \mathbf{\Sigma} \to I_{\mathcal{Z}}\otimes q^*\mathcal{O}(k)$ induces an isomorphism $R^1q_* (\bigwedge^k \mathbf{\Sigma} (p^*L)) \simeq R^1q_*(I_{\mathcal{Z}}(p^*L))\otimes \mathcal{O}(k)$. The claim now follows from Lemma \ref{deg-calc}.
\end{proof}
Thus $\psi$ provides a surjection
$$H^1(X,\wedge^kM_L (L)) \otimes \mathcal{O}_{\PP(H^0(E))} \twoheadrightarrow \mathcal{O}_{\PP(H^0(E))}(k-2).$$
Recalling that $H^1(X,\wedge^kM_L (L)) \simeq K_{k-1,2}(X,L) \simeq K_{k-1,1}(X,L)^{\vee}$, we obtain:
\begin{prop}
We have a natural morphism 
$$\widetilde{\psi} \; : \; \PP(H^0(E)) \to \PP(K_{k-1,1}(X,L))$$
of degree $k-2$.
\end{prop}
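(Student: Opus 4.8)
The plan is to read off $\widetilde{\psi}$ directly from the surjection $\psi$ constructed in the previous paragraph, so that the proposition becomes a formal application of the universal property of projective space rather than a genuinely new construction. First I would use Green's Koszul duality to rewrite the source of $\psi$: since $H^1(X,\wedge^kM_L(L)) \simeq K_{k-1,2}(X,L) \simeq K_{k-1,1}(X,L)^{\vee}$, the surjection takes the form
$$\psi \; : \; K_{k-1,1}(X,L)^{\vee} \otimes \mathcal{O}_{\PP(H^0(E))} \twoheadrightarrow \mathcal{O}_{\PP(H^0(E))}(k-2).$$
The point I would stress is that this is exactly the datum of a quotient line bundle of the trivial bundle with fibre $K_{k-1,1}(X,L)^{\vee}$, which is precisely what is classified by a morphism into $\PP(K_{k-1,1}(X,L))$.

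Next I would invoke the universal property with the conventions fixed at the start of this section. Writing $\PP(K_{k-1,1}(X,L)) = \mathrm{Proj}(\mathrm{Sym}\, K_{k-1,1}(X,L)^{\vee})$, the tautological quotient on this projective space is $K_{k-1,1}(X,L)^{\vee} \otimes \mathcal{O} \twoheadrightarrow \mathcal{O}(1)$, and morphisms from a base $Y$ into $\PP(K_{k-1,1}(X,L))$ correspond bijectively to quotient line bundles of $K_{k-1,1}(X,L)^{\vee} \otimes \mathcal{O}_{Y}$. Applying this with $Y = \PP(H^0(E))$ and the quotient $\psi$ produces the desired morphism $\widetilde{\psi}$, together with the identification
$$\widetilde{\psi}^{*}\mathcal{O}_{\PP(K_{k-1,1}(X,L))}(1) \simeq \mathcal{O}_{\PP(H^0(E))}(k-2).$$
This last isomorphism is exactly the assertion that $\widetilde{\psi}$ is given by (a linear subsystem of) the degree $k-2$ forms on $\PP(H^0(E))$, i.e.\ that $\widetilde{\psi}$ has degree $k-2$.

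I do not expect a serious obstacle here, since all of the real difficulty has already been absorbed into the earlier results, but two points deserve care. The first, and the only conceptually substantive one, is that $\widetilde{\psi}$ is a genuine morphism defined on all of $\PP(H^0(E))$ with no indeterminacy: this is guaranteed precisely by the surjectivity of $\psi$ (coming from Proposition \ref{phi-surj} together with Nakayama's Lemma) and by the fact, established in the preceding lemma computing $R^1q_*(\wedge^k\mathbf{\Sigma}(p^*L))$, that the target of $\psi$ is a line bundle rather than a sheaf of higher rank; had the cokernel of $\psi$ been supported in codimension, one would obtain only a rational map. The second is mere bookkeeping with the convention $\PP(S)=\mathrm{Proj}(\mathrm{Sym}\, S^{\vee})$, ensuring that it is $K_{k-1,1}(X,L)^{\vee}$, and not $K_{k-1,1}(X,L)$ itself, that appears as the fibre of the trivial bundle being quotiented; the Koszul duality step is exactly what makes these conventions match. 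Given these inputs, I expect the argument to occupy only a few lines.
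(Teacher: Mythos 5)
Your proposal is correct and is essentially identical to the paper's (implicit) argument: the paper likewise rewrites the source of $\psi$ via Koszul duality as $K_{k-1,1}(X,L)^{\vee}\otimes\mathcal{O}_{\PP(H^0(E))}$ and reads off $\widetilde{\psi}$ from the surjection onto the line bundle $\mathcal{O}_{\PP(H^0(E))}(k-2)$ by the universal property of projective space, with all the substance residing in the earlier surjectivity and line-bundle lemmas. Your two points of care (no indeterminacy because $\psi$ is surjective onto a genuine line bundle, and the $\mathrm{Proj}(\mathrm{Sym}\,S^{\vee})$ convention) are exactly the right ones.
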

On the other hand, note that, by Voisin's Theorem \cite{V1} we know
$$\dim K_{k-1,1}(X,L) = \binom{2k-1}{k-2}=\dim H^0(\mathcal{O}_{\PP(H^0(E))}(k-2)),$$
see \cite[\S 4.1]{farkas-progress}. Thus, one expects that $\widetilde{\psi}$ is the Veronese embedding of degree $k-2$. To show that this is actually we case, it suffices to show:
\begin{thm} \label{main-thm}
The morphism 
$$H^0(\psi) \; : \; H^1(X,\wedge^kM_L (L)) \to H^0(R^1q_*(\bigwedge^k \mathbf{\Sigma} (p^*L)))$$
is surjective.
\end{thm}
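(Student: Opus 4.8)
Write $Y := X\times\PP(H^0(E))$ and abbreviate
$$\mathcal{G} := \textstyle\bigwedge^k\mathcal{M}(p^*L)\simeq p^*(\textstyle\bigwedge^k M_L(L)),\qquad \mathcal{F} := \textstyle\bigwedge^k\mathbf{\Sigma}(p^*L),\qquad \mathcal{I} := I_{\mathcal{Z}}(p^*L)\otimes q^*\mathcal{O}(k).$$
The plan is to reinterpret $H^0(\psi)$ as a map between total-space cohomology groups on $Y$ and to reduce its surjectivity to a single vanishing that, after blowing up $Y$ along $\mathcal{Z}$, is read off from the K\"unneth formula. Composing the surjection $\mathcal{G}\twoheadrightarrow\mathcal{F}$ globalizing Proposition \ref{phi-surj} with the canonical surjection $\mathcal{F}\twoheadrightarrow\mathcal{I}$ gives a surjection $\Theta:\mathcal{G}\twoheadrightarrow\mathcal{I}$. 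Since the kernel of $\mathcal{F}\to\mathcal{I}$ is supported on $\mathcal{Z}$, which is finite over $\PP(H^0(E))$, the induced map $R^1q_*\mathcal{F}\to R^1q_*\mathcal{I}$ is an isomorphism, so $R^1q_*\Theta$ is identified with $\psi$. As $\mathcal{G}$ is pulled back from $X$ and $H^{>0}(\PP(H^0(E)),\mathcal{O})=0$, K\"unneth gives $R^iq_*\mathcal{G}\simeq H^i(X,\bigwedge^kM_L(L))\otimes\mathcal{O}$ and the Leray edge map $H^1(Y,\mathcal{G})\xrightarrow{\sim} H^0(R^1q_*\mathcal{G})$ is an isomorphism. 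Hence $H^0(\psi)$ is surjective if and only if the composite
$$H^1(Y,\mathcal{G})\xrightarrow{\ H^1(\Theta)\ }H^1(Y,\mathcal{I})\xrightarrow{\ \mathrm{edge}\ }H^0(\PP(H^0(E)),\mathcal{O}(k-2))$$
is surjective.

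Next I would dispose of the edge map. Fibrewise one has $H^2(X,I_Z(L))=0$, so $R^2q_*\mathcal{I}=0$ and only the rows $q=0,1$ survive in the Leray spectral sequence; the edge map is then surjective once $H^1(\PP(H^0(E)),q_*(I_{\mathcal{Z}}\otimes p^*L)\otimes\mathcal{O}(k))=0$, which follows by twisting the sequence $0\to\mathcal{O}(-2)\to H^0(E)\otimes\mathcal{O}(-1)\to q_*(I_{\mathcal{Z}}\otimes p^*L)\to 0$ of Lemma \ref{deg-calc} by $\mathcal{O}(k)$ and using $H^1(\PP^{k+1},\mathcal{O}(k-1))=H^2(\PP^{k+1},\mathcal{O}(k-2))=0$. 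Twisting instead the resolution $0\to q^*\mathcal{O}(-2)\to E\boxtimes\mathcal{O}(-1)\to I_{\mathcal{Z}}\otimes p^*L\to 0$ by $q^*\mathcal{O}(k)$ and applying K\"unneth (with $H^\bullet(X,\mathcal{O}_X)$ in degrees $0,2$, $H^\bullet(X,E)$ in degree $0$, and the vanishing of $H^\bullet(\PP^{k+1},\mathcal{O}(-1))$, $H^\bullet(\PP^{k+1},\mathcal{O}(-2))$) yields $H^1(Y,\mathcal{I})\simeq H^2(Y,q^*\mathcal{O}(k-2))\simeq H^0(\PP^{k+1},\mathcal{O}(k-2))$, of dimension $\binom{2k-1}{k-2}$. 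Thus the edge map is an isomorphism.

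It remains to prove that $H^1(\Theta):H^1(Y,\mathcal{G})\to H^1(Y,\mathcal{I})$ is surjective. Since $M_L$ is stable of negative slope, $\bigwedge^kM_L$ is semistable of negative slope, so $H^0(X,\bigwedge^kM_L)=0$ and hence $H^2(X,\bigwedge^kM_L(L))=0$ by Serre duality; by K\"unneth this gives $H^2(Y,\mathcal{G})=0$. Setting $\mathcal{C}:=\ker\Theta$, the long exact sequence of $0\to\mathcal{C}\to\mathcal{G}\to\mathcal{I}\to0$ then shows that $H^1(\Theta)$ is surjective if and only if $H^2(Y,\mathcal{C})=0$. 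This is the heart of the matter. The obstacle is that $\mathbf{\Sigma}$, and with it $\mathcal{C}$, fails to be locally free along $\mathcal{Z}$, so neither K\"unneth nor the stability argument of Proposition \ref{phi-surj} applies to $\mathcal{C}$ directly on $Y$. To circumvent this I would pass to the blow-up $\pi:B\to Y$ along $\mathcal{Z}$, with exceptional divisor $\Delta$. As $\mathcal{Z}$ is a codimension-two local complete intersection, $I_{\mathcal{Z}}\cdot\mathcal{O}_B=\mathcal{O}_B(-\Delta)$ and $R\pi_*\mathcal{O}_B(-\Delta)=I_{\mathcal{Z}}$; on $B$ the pullback of $\mathbf{\Sigma}$ is resolved to a locally free sheaf and $\Theta$ is represented by a surjection of vector bundles with locally free kernel $\mathcal{C}_B$ satisfying $R\pi_*\mathcal{C}_B=\mathcal{C}$, so that $H^2(Y,\mathcal{C})=H^2(B,\mathcal{C}_B)$.

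The final step is therefore to verify $H^2(B,\mathcal{C}_B)=0$. I would do this by resolving $\mathcal{C}_B$ through the pulled-back Koszul sequence of $\mathcal{Z}$ together with the Lazarsfeld--Mukai sequences for $E$ and for $N$ from Lemma \ref{ses-N}, which express the relevant terms as $\mathcal{O}_B(-\Delta)$-twists of pullbacks of the split bundles $q^*\mathcal{O}(j)$ and $E\boxtimes\mathcal{O}(j)$. Pushing forward by $\pi$ and invoking the K\"unneth formula on $X\times\PP(H^0(E))$ reduces every contribution to a product $H^a(X,-)\otimes H^b(\PP^{k+1},\mathcal{O}(j))$ which vanishes in the required range, the decisive inputs being $H^1(X,E)=H^2(X,E)=0$, $H^1(X,\mathcal{O}_X)=0$, and the cohomology of line bundles on $\PP^{k+1}$. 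I expect the main difficulty to lie precisely in making $\mathcal{C}_B$ explicit enough on the blow-up that its cohomology collapses to this K\"unneth computation; once that identification is in place, the vanishing is forced.
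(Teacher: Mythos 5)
Your chain of reductions is sound and, modulo bookkeeping, is the same one the paper makes: the paper likewise passes from $R^1q_*(\bigwedge^k\mathbf{\Sigma}(p^*L))$ to $R^1q_*(I_{\mathcal{Z}}\otimes L\boxtimes\mathcal{O}(k))$ via the canonical surjection with kernel supported on $\mathcal{Z}$, kills the $H^{>0}$ of both direct images (using the very sequence $0\to\mathcal{O}(k-2)\to q_*E\otimes\mathcal{O}(k-1)\to q_*((L\boxtimes\mathcal{O}(k))\otimes I_{\mathcal{Z}})\to 0$ you twist) so that the Leray edge maps identify $H^0(\psi)$ with a map of total-space $H^1$'s, and then transports everything to the blow-up $B$, where $\bigwedge^k\Gamma\simeq I_D\otimes q'^*\mathcal{O}(k)$ makes all sheaves locally free. (Two small slips that do not affect the outcome: $H^1(\PP,q_*\mathcal{I})=0$ gives injectivity of the edge map, and it is the vanishing of $H^2(\PP,q_*\mathcal{I})$ that gives surjectivity; and your Serre-duality line should read $H^2(X,\bigwedge^kM_L(L))^{\vee}\simeq H^0(X,\bigwedge^kM_L^{\vee}(-L))\simeq H^0(X,\bigwedge^{k-1}M_L)=0$ by semistability and negative slope.)

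The genuine gap is the final step, which is the actual content of the theorem and which you explicitly leave open: proving $H^2(B,\mathcal{C}_B)=0$, i.e.\ the surjectivity of $H^1(B,\bigwedge^k\pi^*\mathcal{M}\otimes p'^*L)\to H^1(B,\bigwedge^k\Gamma\otimes p'^*L)$ (the paper's Theorem \ref{real-thm}). The kernel of $\bigwedge^k\pi^*\mathcal{M}\to\bigwedge^k\Gamma$ is not accessible from the Koszul resolution of $\mathcal{Z}$ and the Lazarsfeld--Mukai sequences alone; the missing organizing tool is Weyman's resolution \cite{weyman-sym-ext} of the exterior power of a quotient bundle, applied to $0\to\mathcal{S}\to\pi^*\mathcal{M}\to\Gamma\to 0$:
$$0 \to \mathrm{Sym}^{k}\mathcal{S}\otimes p'^*L \to \mathrm{Sym}^{k-1}\mathcal{S}\otimes\pi^{*}\mathcal{M}\otimes p'^*L \to \cdots \to \mathcal{S}\otimes\textstyle\bigwedge^{k-1}\pi^{*}\mathcal{M}\otimes p'^*L \to \textstyle\bigwedge^{k}\pi^{*}\mathcal{M}\otimes p'^*L \to \textstyle\bigwedge^{k}\Gamma\otimes p'^*L \to 0.$$
This reduces the claim to $H^{1+i}(B,\mathrm{Sym}^i\mathcal{S}\otimes\bigwedge^{k-i}\pi^*\mathcal{M}\otimes p'^*L)=0$ for $1\le i\le k$, and a second Weyman-type sequence $0\to\mathrm{Sym}^i\mathcal{S}\to\mathrm{Sym}^i(q'^*q'_*(p'^*L\otimes I_D))\to\mathrm{Sym}^{i-1}(q'^*q'_*(p'^*L\otimes I_D))\otimes p'^*L\otimes I_D\to 0$ converts the symmetric powers of $\mathcal{S}$ into split bundles twisted by $I_D$; only at that point do the Koszul resolution of $I_{\mathcal{Z}}$ and the K\"unneth formula take over, exactly along the lines you anticipate. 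Without these two resolutions the "identification of $\mathcal{C}_B$" you defer cannot be completed, so the decisive vanishing remains unproved in your write-up.
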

We will prove the theorem above in the next section. 

\section{Surjectivity of $H^0(\psi)$}
To ease the notation, set $\PP:=\PP(H^0(E))$. Let $\pi: B \to X \times \PP$ denote the blow-up at $\mathcal{Z} \seq X \times \PP$ and let $D$ denote the exceptional divisor. Note that $\mathcal{Z}$ is a local complete intersection in $X \times \PP$, since it is a codimension two locus defined by the zeroes of a section of a rank two vector bundle. By \cite[\S 5]{kovacs-rational}, we have that $D \to \mathcal{Z}$ is a projective bundle, and further $R\pi_* \mathcal{O}_D \simeq \mathcal{O}_Z$, $R \pi_* \mathcal{O}_B \simeq \mathcal{O}_{X \times \PP}$ under the natural morphisms. Further, applying $\pi_*$ to 
$$0 \to I_D \to \mathcal{O}_B \to \mathcal{O}_D \to 0,$$
we see $\pi_*I_D \simeq I_{\mathcal{Z}}$ and $R^i\pi_*I_D=0, i>0$.\medskip

We now adapt the Ein--Lazarsfeld Secant construction to this setting. Let $p': B \to X$, $q': B \to \PP$ be defined by $p'=p \circ \pi, q'=q \circ \pi$. By the projection formula and the above we have canonical identifications
$$q'_* ({p'}^* L \otimes I_D) \simeq q_*(p^*L \otimes I_{\mathcal{Z}}), \;\; q'_*{p'}^*L \simeq q_*p^*L.$$
This lets us write the rank $k$ vector bundle $\mathcal{W}$ as
$$\mathcal{W}= \rm{Coker}\left(q'_* ({p'}^* L \otimes I_D) \to q'_*{p'}^*L\right).$$
Further, the natural composition
$${q'}^* \mathcal{W} \to {q'}^*q'_*({p'}^*L_{|_D}) \to {p'}^* L_{|_D},$$
is just $\pi^*$ applied to the natural, surjective map $q^* \mathcal{W} \to p^* L_{|_\mathcal{Z}}$. Hence ${q'}^* \mathcal{W} \twoheadrightarrow {p'}^* L_{|_D}$ is surjective and we define
$$\Gamma:= \rm{Ker}\left({q'}^* \mathcal{W} \twoheadrightarrow {p'}^* L_{|_D} \right),$$
which will replace $ \mathbf{\Sigma}$ in the setting of the blow-up.\medskip

The advantage of passing to the blow-up is revealed in the following:
\begin{lem}
The sheaf $\Gamma$ is locally free of rank $k$. Furthermore, $\wedge^k \Gamma \simeq I_D \otimes {q'}^* \mathcal{O}(k)$.
\end{lem}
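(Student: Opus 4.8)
The plan is to treat $\Gamma$ as an elementary (Hecke) modification of the rank $k$ bundle ${q'}^*\mathcal{W}$ along the smooth Cartier divisor $D$, carrying out the local analysis at $D$ and reading off both assertions at once. First I would record that ${p'}^*L_{|_D}$ is the pushforward to $B$ of a line bundle on $D$, and that the defining surjection ${q'}^* \mathcal{W} \twoheadrightarrow {p'}^* L_{|_D}$ restricts, for $\iota: D \hookrightarrow B$ the inclusion, to a surjection of vector bundles $\iota^*{q'}^*\mathcal{W} \twoheadrightarrow {p'}^*L|_D$ onto a line bundle on $D$. Away from $D$ the target vanishes, so there $\Gamma \simeq {q'}^*\mathcal{W}$ is locally free of rank $k$; all the content is concentrated at points of $D$.

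Near a point of $D$ I would choose local coordinates with $D=\{t=0\}$ and a trivialization ${q'}^*\mathcal{W} \simeq \mathcal{O}_B^{\oplus k}$ with basis $e_1,\dots,e_k$, so that the map is $e_j \mapsto a_j \in \mathcal{O}_D$. Surjectivity on $D$ forces some $a_j$ to be a unit; after reordering and rescaling I take $a_1=1$. Lifting each $a_i$ arbitrarily to $\tilde a_i \in \mathcal{O}_B$, I would check that the elements $v_i := e_i - \tilde a_i e_1$ for $2 \le i \le k$, together with $w := t\,e_1$, lie in $\Gamma$ and freely generate it: for an arbitrary local section of $\Gamma$, subtracting the evident combination of the $v_i$ reduces the first coordinate to an element of $(t)$, i.e.\ a multiple of $w$, while linear independence over $\mathcal{O}_B$ is immediate from inspecting coordinates. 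This exhibits $\Gamma$ as locally free of rank $k$. (Alternatively, $\iota_*({p'}^*L|_D)$ has homological dimension one everywhere and ${q'}^*\mathcal{W}$ is locally free, which already forces $\Gamma$ to be locally free.)

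For the determinant I would use the natural map $\wedge^k \Gamma \to \wedge^k {q'}^*\mathcal{W} \simeq {q'}^*\det\mathcal{W} \simeq {q'}^*\mathcal{O}(k)$, where the last identification uses $\det\mathcal{W}\simeq\mathcal{O}_{\PP}(k)$ from Lemma \ref{deg-calc}. This map is injective because $\Gamma \to {q'}^*\mathcal{W}$ is an isomorphism over the dense open $B\setminus D$ and $B$ is integral. It then remains only to identify the image. In the local frame above, $\wedge^k\Gamma$ is generated by $v_2\wedge\cdots\wedge v_k\wedge w = \pm\, t\,(e_1\wedge\cdots\wedge e_k)$, so the image of $\wedge^k\Gamma$ inside the locally free rank one sheaf ${q'}^*\mathcal{O}(k)$ is exactly $t\cdot{q'}^*\mathcal{O}(k) = I_D\cdot{q'}^*\mathcal{O}(k)$. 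Since this holds in a neighbourhood of every point of $D$ and $\wedge^k\Gamma\simeq{q'}^*\mathcal{O}(k)$ off $D$, gluing yields the desired isomorphism $\wedge^k\Gamma \simeq I_D \otimes {q'}^*\mathcal{O}(k)$.

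The main obstacle is purely the local bookkeeping at $D$: one must put the surjection onto the line bundle ${p'}^*L_{|_D}$ into standard form (one coordinate a unit), and then carefully track both the explicit free basis of $\Gamma$ and the resulting single factor of $t$ appearing in the top exterior power. Everything else — local freeness off $D$, injectivity of $\wedge^k\Gamma \hookrightarrow {q'}^*\mathcal{O}(k)$, and the input $\det\mathcal{W}\simeq\mathcal{O}(k)$ — is immediate from the construction and Lemma \ref{deg-calc}. This is the same mechanism as in \cite[Corollary 3.7]{ein-lazarsfeld-asymptotic}, now executed on the blow-up $B$, where the codimension two locus $\mathcal{Z}$ has been replaced by the Cartier divisor $D$; it is precisely this passage from $\mathcal{Z}$ to the divisor $D$ that makes $\Gamma$ locally free, in contrast to the torsion-free sheaf $\mathbf{\Sigma}$ downstairs.
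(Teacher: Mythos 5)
Your proof is correct, but it takes a more explicit route than the paper. For local freeness the paper argues homologically: from $0 \to \Gamma \to {q'}^*\mathcal{W} \to {p'}^*L_{|_D} \to 0$ it suffices to show $\mathrm{Ext}^2_{\mathcal{O}_{B,x}}(\mathcal{O}_{D,x},N)=0$ for every module $N$, which holds because $I_D$ is invertible, so $\mathcal{O}_{D,x}$ has projective dimension one; this is exactly the alternative you mention parenthetically. Your main argument instead produces an explicit local free basis $v_2,\dots,v_k,w=te_1$ of the elementary modification, which is more work but buys you the second claim for free: the transition determinant $\pm t$ immediately identifies the image of $\wedge^k\Gamma \hookrightarrow {q'}^*\mathcal{O}(k)$ as $I_D\cdot {q'}^*\mathcal{O}(k)$, whereas the paper gets $\wedge^k\Gamma \simeq I_D \otimes {q'}^*\mathcal{O}(k)$ by the (slightly less transparent) device of ``taking determinants'' of the short exact sequence, i.e.\ the standard fact that a line bundle supported on a Cartier divisor $D$ has determinant $\mathcal{O}(D)$. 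Two small points of hygiene: you do not need $D$ to be smooth (and it need not be unless $\mathcal{Z}$ is), only that $D$ is Cartier and ${p'}^*L_{|_D}$ is invertible on $D$, which is all your local computation actually uses; and the identification $I_D\cdot{q'}^*\mathcal{O}(k) \simeq I_D\otimes{q'}^*\mathcal{O}(k)$ is licensed precisely because $I_D$ is invertible.
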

\begin{proof}
It suffices to show that, for any $x \in B$ and any $\mathcal{O}_{B,x}$ module $N$, $\rm{Ext}^1_{\mathcal{O}_{B,x}}(\Gamma_x,N)=0$. From $0 \to \Gamma \to q'^* \mathcal{W} \to {p'}^* L_{|_D} \to 0,$ it, in turn, suffices to show $\rm{Ext}^2_{\mathcal{O}_{B,x}}(\mathcal{O}_{D,x},N)=0$, which follows from the fact that $I_D$ is locally free.\medskip

The claim $\wedge^k \Gamma \simeq I_D \otimes {q'}^* \mathcal{O}(k)$ follows immediately by taking determinants of the exact sequence $0 \to \Gamma \to {q'}^* \mathcal{W} \to {p'}^* L_{|_D} \to 0.$
\end{proof}

We have a surjection
$${q'}^*q'_*({p'}^*L \otimes I_D) \twoheadrightarrow {p'}^*L \otimes I_D$$
obtained by composing
${q'}^*q'_*({p'}^*L \otimes I_D) \twoheadrightarrow {p'}^*L \otimes \pi^*I_{\mathcal{Z}}$ with the natural surjection ${p'}^*L \otimes \pi^*I_{\mathcal{Z}} \to {p'}^*L \otimes I_D$.
Define the vector bundle $\mathcal{S}$ by the short exact sequence
$$0 \to \mathcal{S} \to {q'}^*q'_*({p'}^*L \otimes I_D) \to {p'}^*L \otimes I_D \to 0.$$ We have an exact sequence of vector bundles
$$0 \to \mathcal{S} \to \pi^* \mathcal{M} \to \Gamma \to 0.$$
\begin{thm} \label{real-thm}
The natural morphism $H^1(B, \wedge^k \pi^* \mathcal{M} \otimes {p'}^*L) \to H^1(B, \wedge^k \Gamma \otimes {p'}^*L)$ is surjective.
\end{thm}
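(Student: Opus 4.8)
The plan is to prove Theorem \ref{real-thm} as the relative, blown-up analogue of Proposition \ref{phi-surj}: I would reduce the asserted surjectivity to the vanishing of a single second cohomology group on $B$, and then compute that group by descending along $\pi$ to $X \times \PP$ and invoking the K\"unneth formula, rather than trying to apply Serre duality directly on the threefold $B$ (whose canonical bundle is less convenient). First I would record the reduction. The surjection $g_1 : \wedge^k \pi^*\mathcal{M} \twoheadrightarrow \wedge^k\Gamma$ coming from $0 \to \mathcal{S} \to \pi^*\mathcal{M} \to \Gamma \to 0$ has a locally free kernel $K$, and the long exact sequence attached to $0 \to K \to \wedge^k\pi^*\mathcal{M} \to \wedge^k\Gamma \to 0$ (twisted by ${p'}^*L$) shows that the map of the theorem is surjective as soon as $H^2(B, K\otimes {p'}^*L)=0$.

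Next I would filter $K$. The standard filtration of $\wedge^k$ of the middle term of $0 \to \mathcal{S}\to\pi^*\mathcal{M}\to\Gamma\to 0$ has associated graded pieces $\wedge^i\mathcal{S}\otimes\wedge^{k-i}\Gamma$, and $K$ is precisely the subsheaf assembled from the pieces with $i\ge 1$ (the piece $i=0$ being the quotient $\wedge^k\Gamma$). Hence it suffices to prove
$$H^2\!\left(B,\ \wedge^i\mathcal{S}\otimes\wedge^{k-i}\Gamma\otimes {p'}^*L\right)=0, \qquad 1\le i\le k.$$
Note that, because we work with the filtration rather than the three-term complex $\mathcal{S}\otimes\wedge^{k-1}\pi^*\mathcal{M} \to \wedge^k\pi^*\mathcal{M} \to \wedge^k\Gamma$, only the degree-two cohomology of the graded pieces intervenes; this is the substitute for the automatic vanishing $H^3=0$ that is available on the surface $X$ in Proposition \ref{phi-surj} but not on $B$.

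To compute each piece I would exploit two structural facts: that $\pi^*\mathcal{M}={p'}^*M_L$ is pulled back from $X$, and that the quotients defining $\mathcal{S}$ and $\Gamma$ have rank at most one. Indeed, $0\to\mathcal{S}\to {q'}^*\mathcal{G}\to {p'}^*L\otimes I_D\to 0$, with $\mathcal{G}:=q'_*({p'}^*L\otimes I_D)$ pulled back from $\PP$ and ${p'}^*L\otimes I_D$ an honest line bundle, resolves $\wedge^i\mathcal{S}$ by the terms ${q'}^*\wedge^{i-j}\mathcal{G}\otimes({p'}^*L)^{\otimes j}\otimes\mathcal{O}_B(-jD)$; dually, $\wedge^{k-i}\Gamma\simeq\wedge^i\Gamma^\vee\otimes I_D\otimes {q'}^*\mathcal{O}(k)$ combined with the dual of $0\to\Gamma\to {q'}^*\mathcal{W}\to {p'}^*L_{|D}\to 0$ reduces $\wedge^{k-i}\Gamma$ to terms built from ${q'}^*\wedge^{\bullet}\mathcal{W}^\vee$, twists $\mathcal{O}_B(-\bullet D)$, and sheaves supported on $D$. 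In this way every graded piece admits a finite filtration by sheaves of the uniform shape
$$ {q'}^*(\text{bundle on }\PP)\ \otimes\ {p'}^*(\text{power of }L)\ \otimes\ \bigl(\mathcal{O}_B(-jD)\ \text{or its restriction to}\ D\bigr).$$
I would push these down along $\pi$ using $R\pi_*\mathcal{O}_B\simeq\mathcal{O}_{X\times\PP}$, $\pi_*I_D\simeq I_{\mathcal{Z}}$ with $R^{>0}\pi_*I_D=0$, $R\pi_*\mathcal{O}_D\simeq\mathcal{O}_{\mathcal{Z}}$, and the projective-bundle structure of $D\to\mathcal{Z}$ to evaluate the higher direct images of the twists $\mathcal{O}_B(-jD)$. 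Landing on $X\times\PP$ (or on $\mathcal{Z}$), I would then compute cohomology by K\"unneth, together with the sequence $0\to I_{\mathcal{Z}}\to\mathcal{O}\to\mathcal{O}_{\mathcal{Z}}\to 0$ and the finite flat map $\mathcal{Z}\to\PP$: on the $\PP$-factor the relevant groups are governed by Bott vanishing, while on the K3 factor the line bundles are powers of $L$ whose intermediate cohomology vanishes and whose spaces of sections are controlled exactly as in Proposition \ref{phi-surj}.

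The hard part will be this last step. Unlike the top exterior power $\wedge^k\Gamma\simeq I_D\otimes {q'}^*\mathcal{O}(k)$, which has the transparent torsion-quotient description, the lower exterior powers $\wedge^{k-i}\Gamma$ are genuine rank-$\binom{k}{i}$ bundles, so the bookkeeping of their filtrations, the identification of each graded summand after descent, and the verification that the associated line bundle on $X$ has the correct vanishing cohomology are where the real content lies. This numerical verification is the blown-up incarnation of the slope inequality $(k+1)\mu(M_L)-\mu(N)<0$ and the stability input used in Proposition \ref{phi-surj}, and I expect that checking it uniformly across all $i$ is the principal obstacle.
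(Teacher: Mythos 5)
Your opening reduction is correct and genuinely different from the paper's. You pass to the kernel $K$ of $\wedge^k\pi^*\mathcal{M}\twoheadrightarrow\wedge^k\Gamma$ and filter it with graded pieces $\wedge^i\mathcal{S}\otimes\wedge^{k-i}\Gamma$, $1\le i\le k$, so that only $H^2$ of each piece is needed; this is a valid dévissage (the filtration of $\wedge^k$ of the middle term of a short exact sequence of bundles does have $F^1=K$ and the stated graded pieces). The paper instead resolves $\wedge^k\Gamma\otimes {p'}^*L$ by the Weyman complex whose terms are $\mathrm{Sym}^i\mathcal{S}\otimes\wedge^{k-i}\pi^*\mathcal{M}\otimes{p'}^*L$ and must kill $H^{1+i}$ of the $i$-th term. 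Your version trades low cohomological degree for complicated coefficients; the paper's trades simple coefficients for high degree. The paper's choice is deliberate: each term $\mathrm{Sym}^i\mathcal{S}\otimes\wedge^{k-i}\pi^*\mathcal{M}$ is built from $\wedge^{k-i}M_L$ pulled back from $X$ and from $\mathrm{Sym}^i$ of the bundle ${q'}^*q'_*({p'}^*L\otimes I_D)$ pulled back from $\PP$, so after two short dévissages every term is a box product on $X\times\PP$ (possibly twisted by $I_{\mathcal{Z}}$, which itself has a two-term box-product resolution), and the K\"unneth formula finishes the proof.

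The gap is that the vanishing $H^2(B,\wedge^i\mathcal{S}\otimes\wedge^{k-i}\Gamma\otimes {p'}^*L)=0$ --- the entire content of the theorem --- is never established; you defer it as ``the hard part,'' and the route you have chosen makes it genuinely harder than your sketch suggests. The factor $\wedge^{k-i}\Gamma$ is not pulled back from either factor of $X\times\PP$, and your proposed resolution of it via $\wedge^{k-i}\Gamma\simeq\wedge^{i}\Gamma^{\vee}\otimes I_D\otimes{q'}^*\mathcal{O}(k)$ and the dual of $0\to\Gamma\to{q'}^*\mathcal{W}\to{p'}^*L_{|_D}\to0$ produces extensions by torsion sheaves supported on $D$ (the dual sequence reads $0\to{q'}^*\mathcal{W}^{\vee}\to\Gamma^{\vee}\to\mathcal{E}xt^1({p'}^*L_{|_D},\mathcal{O}_B)\to0$), whose exterior powers and whose behaviour under $\pi_*$ you do not analyze. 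The resulting terms supported on $D$ descend to $\mathcal{Z}$, which is finite of degree $k+1$ over $\PP$ and maps nontrivially to $X$, so their cohomology is not governed by Bott vanishing on $\PP$ together with powers of $L$ on $X$ as you assert; and the closing appeal to a ``blown-up incarnation'' of the slope inequality from Proposition \ref{phi-surj} is an analogy, not an argument --- no stability mechanism is set up on $B$. To complete the proof along your lines you would have to carry out the double dévissage of $\wedge^i\mathcal{S}\otimes\wedge^{k-i}\Gamma$ explicitly and verify each resulting vanishing; as written you have established only the (correct) reduction. The paper's substitution of $\mathrm{Sym}^i\mathcal{S}\otimes\wedge^{k-i}\pi^*\mathcal{M}$ for $\wedge^i\mathcal{S}\otimes\wedge^{k-i}\Gamma$ is precisely the missing idea that makes the computation close.
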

\begin{proof}
We have an exact sequence
\begin{align*}
0 &\to \rm{Sym}^k \mathcal{S} \otimes {p'}^*L \to \rm{Sym}^{k-1} \mathcal{S} \otimes \pi^*\mathcal{M}\otimes {p'}^*L \to \ldots \\
 & \to \mathcal{S} \otimes \wedge^{k-1} \pi^* \mathcal{M}   \otimes {p'}^*L \to \wedge^k  \mathcal{M}   \otimes {p'}^*L \to \wedge^k \Gamma \otimes {p'}^*L \to 0,
\end{align*}
\cite{weyman-sym-ext}. It thus suffices to show $$H^{1+i}(\rm{Sym}^i \mathcal{S} \otimes \wedge^{k-i} \pi^* \mathcal{M} \otimes {p'}^*L)=0$$
for $1 \leq i \leq k$. Next, from the short exact sequence
$$0 \to \rm{Sym}^{i} \mathcal{S} \to \rm{Sym}^i\left({q'}^*q'_*({p'}^*L \otimes I_D)\right) \to \rm{Sym}^{i-1}\left({q'}^*q'_*({p'}^*L \otimes I_D)\right)\otimes {p'}^*L \otimes I_D \to 0$$
it suffices to show 
\begin{align*}
&H^{1+i}\left(B,\rm{Sym}^i\left({q'}^*q'_*({p'}^*L \otimes I_D)\right) \otimes \wedge^{k-i} \pi^* \mathcal{M} \otimes {p'}^*L\right)=0,\\
&H^i\left(B,\rm{Sym}^{i-1}\left({q'}^*q'_*({p'}^*L \otimes I_D)\right)\otimes \wedge^{k-i} \pi^* \mathcal{M} \otimes {p'}^*L^{\otimes 2} \otimes I_D\right)=0
\end{align*}
By the Leray spectral sequence, this is equivalent to 
\begin{align*}
&H^{1+i}\left(X \times \PP,\rm{Sym}^i\left({q}^*q_*({p}^*L \otimes I_{\mathcal{Z}})\right) \otimes \wedge^{k-i} \mathcal{M} \otimes {p}^*L\right)=0,\\
&H^i\left(X \times \PP,\rm{Sym}^{i-1}\left({q}^*q_*({p}^*L \otimes I_{\mathcal{Z}})\right)\otimes \wedge^{k-i}  \mathcal{M} \otimes {p}^*L^{\otimes 2} \otimes I_{\mathcal{Z}}\right)=0
\end{align*}
By taking $\rm{Sym}^i$ of the exact sequence
$$0 \to q^*\mathcal{O}(-2) \to q^*q_*p^*E \otimes q^*\mathcal{O}(-1) \to q^*q_*(p^*L \otimes I_{\mathcal{Z}})\to 0$$
it suffices to show
\begin{align*}
&H^{2+i}\left(X \times \PP, \wedge^{k-i} M_L (L) \boxtimes \rm{Sym}^{i-1} \left(q_*p^*E \right)(-i-1)  \right)=0,\\
&H^{1+i}\left(X \times \PP, \wedge^{k-i} M_L (L) \boxtimes \rm{Sym}^i \left(q_*p^*E \right)(-i)  \right)=0,\\
&H^{1+i}\left(X \times \PP, \left(\wedge^{k-i} M_L (2L) \boxtimes \rm{Sym}^{i-2} \left(q_*p^*E \right)(-i)  \right)\otimes I_{\mathcal{Z}} \right)=0,\\
&H^{i}\left(X \times \PP, \left(\wedge^{k-i} M_L (2L) \boxtimes \rm{Sym}^{i-1} \left(q_*p^*E \right)(-i+1)  \right)\otimes I_{\mathcal{Z}} \right)=0.
\end{align*}
Since $q_*p^*E \simeq H^0(X,E) \otimes \mathcal{O}_{\PP}$ is a trivial bundle on $\PP \simeq \PP^{k+1}$, the first two claims follow from the K\"unneth formula, for $1 \leq i \leq k$. For the last two claims, we use the short exact sequence
$$0 \to L^{-1} \boxtimes \mathcal{O}(-2) \to E(L^{-1}) \boxtimes \mathcal{O}(-1) \to I_{\mathcal{Z}} \to 0,$$
so it suffices to have the four vanishings
\begin{align*}
&H^{2+i}\left(\wedge^{k-i} M_L (L) \boxtimes \rm{Sym}^{i-2} \left(q_*p^*E \right)(-i-2)  \right)=0,\\
&H^{1+i}\left( \wedge^{k-i} M_L \otimes E(L) \boxtimes \rm{Sym}^{i-2} \left(q_*p^*E \right)(-i-1)  \right)=0,\\
&H^{1+i}\left( \wedge^{k-i} M_L (L) \boxtimes \rm{Sym}^{i-1} \left(q_*p^*E \right)(-i-1)  \right)=0,\\
&H^{i}\left(\wedge^{k-i} M_L \otimes E(L) \boxtimes \rm{Sym}^{i-1} \left(q_*p^*E \right)(-i)  \right)=0.
\end{align*}
This follows from the K\"unneth formula (using $H^1(X,L)=0$ if $i=k$ in the first vanishing).
\end{proof}
We now immediately deduce the proof of Theorem \ref{main-thm}.
\begin{proof}[Proof of Theorem \ref{main-thm}]
Recall that $R^1q_*(\bigwedge^k \mathbf{\Sigma} (p^*L)) \simeq R^1q_*\left( (L \boxtimes \mathcal{O}(k)) \otimes I_{\mathcal{Z}} \right) $. We have natural isomorphisms
\begin{align*}
H^1(X \times \PP, \wedge^k \mathcal{M} (p^*L)) &\simeq H^1(B, \wedge^k \pi^* \mathcal{M} \otimes {p'}^*L) \\
H^1(X \times \PP, (L \boxtimes \mathcal{O}(k)) \otimes I_{\mathcal{Z}}) &\simeq H^1(B, \pi^*(L \boxtimes \mathcal{O}(k)) \otimes I_D) \simeq H^1(B, \wedge^k \Gamma \otimes {p'}^*L)
\end{align*}
so that the natural map $H^1(X \times \PP, \wedge^k \mathcal{M} (p^*L)) \to H^1(X \times \PP, (L \boxtimes \mathcal{O}(k)) \otimes I_{\mathcal{Z}})$ is surjective by Theorem \ref{real-thm}.
Next, $q_*\wedge^k \mathcal{M} (p^*L) \simeq H^0(X,\wedge^k M_L(L)) \otimes \mathcal{O}_{\PP}$ so that $H^i(q_*\wedge^k \mathcal{M} (p^*L))=0$ for $i>0$. Furthermore, the short exact sequence
$$0 \to \mathcal{O}_{\PP}(k-2) \to q_*E \otimes \mathcal{O}_{\PP}(k-1) \to q_*\left((L \boxtimes \mathcal{O}(k)) \otimes I_{\mathcal{Z}}\right)\to 0$$
shows $H^i(q_*\left((L \boxtimes \mathcal{O}(k)) \otimes I_{\mathcal{Z}}\right))=0$ for $i>0$. Hence the Leray spectral sequence produces natural isomorphisms
\begin{align*}
H^1(X \times \PP, \wedge^k \mathcal{M} (p^*L))  &\simeq  H^0(R^1q_*\wedge^k \mathcal{M} (p^*L))\\
H^1(X \times \PP, (L \boxtimes \mathcal{O}(k))\otimes I_{\mathcal{Z}})&\simeq H^0(R^1q_*(L \boxtimes \mathcal{O}(k)) \otimes I_{\mathcal{Z}}), 
\end{align*}
giving the claim.
\end{proof}
As a corollary, we see that $\widetilde{\psi}$ is the Veronese embedding of degree $k-2$, and that there is a natural isomorphism $K_{k-1,1}(X,L) \simeq \rm{Sym}^{k-2}\; H^0(X,E)$.
\begin{thm} \label{geometric-syz}
Let $X$ be a K3 surface of even genus $g=2k$ with $\rm{Pic}(X) \simeq \mathbb{Z}[L]$. The syzygy space $K_{k-1,1}(X,L)$ is spanned by geometric syzygies of rank at most $k+1$.
\end{thm}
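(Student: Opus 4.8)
The plan is to produce the required geometric syzygies as the images $\alpha(s)=\widetilde{\psi}(s)$ of the morphism $\widetilde{\psi}\colon \PP(H^0(E)) \to \PP(K_{k-1,1}(X,L))$ constructed above, and then to verify separately the two assertions of the statement: that each $\alpha(s)$ is geometric of rank at most $k+1$, and that the $\alpha(s)$ span $K_{k-1,1}(X,L)$. The genuinely hard input is already secured by Theorem \ref{main-thm}: that $H^0(\psi)$ has full rank, so that $\widetilde{\psi}$ is the degree $(k-2)$ Veronese embedding and $K_{k-1,1}(X,L)\simeq \mathrm{Sym}^{k-2}H^0(X,E)$. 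Given this, what remains is essentially formal.

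For the spanning statement I would argue as follows. Under the identification $K_{k-1,1}(X,L)\simeq \mathrm{Sym}^{k-2}H^0(X,E)$, the point $\alpha(s)$ corresponds to $[s^{k-2}]$, so the $\alpha(s)$ are precisely the points of the Veronese variety $v_{k-2}(\PP(H^0(E)))$. A Veronese variety is linearly nondegenerate: a linear functional on $\mathrm{Sym}^{k-2}H^0(E)$ vanishing on every pure power $s^{k-2}$ is a degree $(k-2)$ form vanishing identically, hence zero. Therefore the pure powers $s^{k-2}$ span $\mathrm{Sym}^{k-2}H^0(E)$, and the syzygies $\alpha(s)$ span $K_{k-1,1}(X,L)$.

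For the rank bound I would return to the secant construction of Section \ref{secant}. Fix $0\neq s\in H^0(E)$ with zero-locus $Z=Z(s)$, of length $c_2(E)=k+1$. From $0 \to \mathcal{O}_X \xrightarrow{s} E \to L\otimes I_Z \to 0$ together with $H^1(\mathcal{O}_X)=0$ on the K3 surface, one reads off $h^0(L\otimes I_Z)=h^0(E)-1=k+1$. By construction $\alpha(s)$ is dual to the functional $\phi\colon K_{k-1,2}(X,L)=H^1(X,\wedge^k M_L(L)) \to H^1(X,\wedge^k\Sigma(L))\simeq \C$, and $\phi$ is induced by the surjection $\wedge^k M_L \twoheadrightarrow \wedge^k\Sigma$ coming from $M_L \twoheadrightarrow \Sigma = M_L/N$. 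By Lemma \ref{ses-N}, $N$ is exactly the kernel bundle of the $(k+1)$-dimensional subspace $V':=H^0(L\otimes I_Z)\seq H^0(L)$. The key point to establish is that a functional on $K_{k-1,2}(X,L)$ pulled back from $H^1(X,\wedge^k\Sigma(L))$ dualizes to a syzygy supported on $V'$, i.e.\ lying in the image of $K_{k-1,1}(X,L,V') \to K_{k-1,1}(X,L)$. Granting this, $\mathrm{rank}(\alpha(s))\leq \dim V'=k+1=(k-1)+2$, so $\alpha(s)$ is geometric, and the theorem follows.

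I expect the main obstacle to be precisely this last identification of the support subspace: converting the bundle-level factorization of $\phi$ through $\wedge^k\Sigma$ into the statement $\alpha(s)\in K_{k-1,1}(X,L,H^0(L\otimes I_Z))$ about the minimal linear span of the syzygy. I anticipate this will follow from the functoriality of Koszul cohomology under the inclusion $V'\hookrightarrow H^0(L)$, combined with the identification of $N$ as the kernel bundle of $V'$, in the manner of the Ein--Lazarsfeld secant calculations \cite{ein-lazarsfeld-asymptotic}; the accompanying cohomological bookkeeping should be routine. Once both the rank bound and the spanning are in hand, the conclusion is immediate.
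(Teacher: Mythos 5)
Your overall architecture coincides with the paper's: the spanning assertion is exactly the linear nondegeneracy of the degree $(k-2)$ Veronese image of $\PP(H^0(E))$, which is how the paper gets spanning from Theorem \ref{main-thm}, and your reduction of the rank bound to the claim $\alpha(s)\in K_{k-1,1}(X,L,H^0(L\otimes I_Z))$ with $h^0(L\otimes I_Z)=k+1$ is also the paper's reduction. The problem is the step you defer as ``routine cohomological bookkeeping'': converting the factorization of $\phi$ through $H^1(X,\wedge^k\Sigma(L))$ into a statement about the minimal linear span of $\phi^{\vee}(\alpha)$ is the entire content of the paper's proof of this theorem, and it does not follow from plain functoriality of Koszul cohomology under $V'=H^0(L\otimes I_Z)\hookrightarrow H^0(L)$. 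The obstruction is that $V'$ fails to generate $L$ along $Z$, so on $X$ itself there is no locally free kernel bundle computing $K_{k-1,1}(X,L,V')$, and $\Sigma$ and $\wedge^k\Sigma$ are only torsion-free; one cannot directly match the image of $\phi^{\vee}$ with the image of $K_{k-1,1}(X,L,V')\to K_{k-1,1}(X,L)$ at this level.

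The paper's resolution is a specific maneuver you would need to supply. It passes to the blow-up $\pi:\widetilde{X}\to X$ along $Z$ with exceptional divisor $D$, where $I_D$ is invertible and the analogues $S$ and $\gamma$ of $N$ and $\Sigma$ are genuine vector bundles of rank $k$; it then uses the identifications $\wedge^kM_L(L)\simeq \wedge^kM_L^{\vee}$ and $I_D(\pi^*L)\simeq \wedge^kS^{\vee}$ together with Serre duality to convert the surjection $\phi$ into an inclusion $H^1(\wedge^kS(D))\hookrightarrow H^1(\wedge^kM_L(D))$, reads this as the inclusion of Koszul groups $K_{k-1,1}(\widetilde{X},D,\pi^*L-D)\hookrightarrow K_{k-1,1}(\widetilde{X},D,\pi^*L)\simeq K_{k-1,1}(X,L)$, and finally identifies the image with $K_{k-1,1}(\widetilde{X},\pi^*L,H^0(\pi^*L-D))$, giving rank at most $h^0(\pi^*L-D)=k+1$. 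Without this blow-up-and-dualize argument (or an equivalent substitute) the rank bound is not established; the rest of your proposal, including the spanning argument and the computation $h^0(L\otimes I_Z)=k+1$, is correct.
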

\begin{proof}
We follow the notation from section \ref{secant}. It remains to show that, for any fixed nonzero $s \in H^0(X,E)$ and any nonzero $\alpha \in H^1(X, \wedge^k \Sigma(L))^{\vee}$, then
$$\phi^{\vee}(\alpha) \in H^1(X,\wedge^kM_L(L))^{\vee} \simeq K_{k-1,1}(X,L)$$
is a syzygy of rank at most $k+1$. Let $\pi: \widetilde{X} \to X$ denote the blow-up of $X$ at $Z:=Z(s)$, with exceptional divisor $D$. We may naturally identify $W=H^0(X,L)/H^0(X,L \otimes I_Z)$ with $H^0(\widetilde{X},\pi^*L)/H^0(\widetilde{X},\pi^*L \otimes I_D)$. Define
$$ \gamma:= \rm{Ker}(W \otimes \mathcal{O}_{\widetilde{X}} \twoheadrightarrow \pi^*L_{|_D}).$$
There is a short exact sequence
$$0 \to S \to \pi^*M_L \to \gamma \to 0,$$
where $S$ is the kernel of the surjection $H^0(\pi^*L \otimes I_D) \otimes \mathcal{O}_{\widetilde{X}} \to \pi^*L \otimes I_D$. 
Both $S$ and $\gamma$ are vector bundles, of the same rank $k$. Further, $\phi$ may be identified with the natural map 
$$H^1(\pi^*\wedge^kM_L(L)) \to H^1(\wedge^k\gamma (L)) \simeq H^1(I_D(\pi^*L)).$$
Note that $\wedge^k M_L (L) \simeq \wedge^k M_L^{\vee}$ and $I_D(\pi^*L) \simeq \wedge^k S^{\vee}$. We may thus identify $\phi$ with a natural, surjective map
$$H^1(\pi^* \wedge^k M^{\vee}_L) \to H^1(\wedge^k S^{\vee})$$ arising from the surjection $\pi^*M^{\vee}_L \twoheadrightarrow S^{\vee}$. This map is Serre dual to an inclusion
$H^1(\wedge^k S (D)) \to H^1(\wedge^k M_L(D))$ which we may view as an inclusion of Koszul spaces
$$K_{k-1,1}(\widetilde{X},D,\pi^*L-D) \hookrightarrow K_{k-1,1}(\widetilde{X},D,\pi^*L).$$ Since
$H^0(\widetilde{X}, \pi^* L^{\otimes n}(D)) \simeq H^0(\widetilde{X}, \pi^* L^{\otimes n}) $ for any $n$, the natural map $$K_{k-1,1}(\widetilde{X},\pi^*L) \to K_{k-1,1}(\widetilde{X},D,\pi^*L)$$ is an isomorphism, and further $K_{k-1,1}(\widetilde{X},\pi^*L) \simeq K_{k-1,1}(X,L)$. This map identifies the subspace
$$K_{k-1,1}(\widetilde{X},\pi^*L, H^0(\pi^*L-D)) \seq K_{k-1,1}(\widetilde{X},\pi^*L)$$ with $K_{k-1,1}(\widetilde{X},D,\pi^*L-D)$. Thus syzygies in the image of $\phi^{\vee}$ have rank at most $h^0(\pi^*L-D)=k+1$.

\end{proof}
The argument in the following Corollary is essentially due to Voisin, \cite[Proof of Prop.\ 7]{V1} and also appears in \cite{bothmer-preprint}.
\begin{cor}
Let $C$ be a general canonical curve of genus $g=2k$. Then $K_{k-1,1}(C,\omega_C)$ is generated by syzygies of rank $k$. More precisely, $K_{k-1,1}(C,\omega_C)$ is generated by the subspaces $K_{k-1,1}(C,\omega_C, H^0(\omega_C \otimes A^{\vee}))$, where $A \in W^1_{k+1}(C)$ is a minimal pencil.
\end{cor}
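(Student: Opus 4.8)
The plan is to deduce the statement for the general curve from the K3 result of Theorem \ref{geometric-syz} by a restriction-and-specialization argument in the spirit of Voisin \cite[Prop.\ 7]{V1}. Since $\mathcal{M}_g$ is irreducible and the condition ``$K_{k-1,1}(C,\omega_C)$ is generated by the subspaces $K_{k-1,1}(C,\omega_C,H^0(\omega_C\otimes A^\vee))$, $A\in W^1_{k+1}(C)$'' is an open (maximal-rank) condition on the dense open locus where $\dim K_{k-1,1}(C,\omega_C)=\binom{2k-1}{k-2}$, it suffices to verify the assertion at a single curve $C_0$. I would take $C_0$ to be a smooth member of $|L|$ on a K3 surface $X$ with $\mathrm{Pic}(X)=\mathbb{Z}[L]$ and $(L)^2=2g-2$; by adjunction $\omega_{C_0}\cong L|_{C_0}$, and, as recorded in Section \ref{secant}, $C_0$ has gonality $k+1$. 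The first ingredient I need is the comparison of Koszul groups on $X$ and $C_0$: using Green's hyperplane section theorem \cite{green-koszul} together with the cohomological content of the Lefschetz hyperplane theorem, concretely the vanishing of $H^1(X,L^{\otimes m})$ for all $m$ (valid because $X$ is a K3 surface of Picard number one), the restriction map $K_{k-1,1}(X,L)\to K_{k-1,1}(C_0,\omega_{C_0})$ is an isomorphism.

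Next I would match the geometric syzygies with the pencils. By Theorem \ref{geometric-syz} the space $K_{k-1,1}(X,L)$ is spanned by the syzygies $\alpha(s)\in K_{k-1,1}(X,L,H^0(L\otimes I_{Z(s)}))$, and via the Lazarsfeld--Mukai description \cite{lazarsfeld-BNP} a section $s$ whose zero-scheme lies on $C_0$ corresponds to a minimal pencil $A\in W^1_{k+1}(C_0)$ with $\mathcal{O}_{C_0}(Z(s))\cong A$; the sections inducing a fixed $A$ sweep out the line $\mathbb{P}(H^0(A)^\vee)\subseteq\mathbb{P}(H^0(E))$. For such an $s$ the restriction $H^0(L\otimes I_{Z(s)})\to H^0(\omega_{C_0}\otimes A^\vee)$ has one-dimensional kernel, spanned by the equation of $C_0$, so the isomorphism above carries $\alpha(s)$ into $K_{k-1,1}(C_0,\omega_{C_0},H^0(\omega_{C_0}\otimes A^\vee))$: a syzygy of rank $h^0(\omega_{C_0}\otimes A^\vee)=k$ whose syzygy scheme is the scroll $Y_A$. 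This identifies the subspaces appearing in the statement with the images of the pencil-adapted secant syzygies on $X$.

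The heart of the matter, and the step I expect to be the main obstacle, is to upgrade the spanning. Theorem \ref{thm-vero} gives that the syzygies $\alpha(s)$ span as $s$ ranges over all of $\mathbb{P}(H^0(E))\cong\mathbb{P}^{k+1}$, but on the fixed curve $C_0$ only the finitely many lines $\mathbb{P}(H^0(A)^\vee)$, $A\in W^1_{k+1}(C_0)$, are available. Under the identification $K_{k-1,1}(X,L)\cong\mathrm{Sym}^{k-2}H^0(E)$ of Theorem \ref{thm-vero}, the pencil-adapted syzygies span $\sum_{A}\mathrm{Sym}^{k-2}(H^0(A)^\vee)$, and the task is to show that these $(k-1)$-dimensional subspaces, one for each minimal pencil, together exhaust $\mathrm{Sym}^{k-2}H^0(E)$. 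This is a genericity (apolarity) statement about the mutual position of the two-dimensional subspaces $H^0(A)^\vee\subseteq H^0(E)$ attached to the minimal pencils, and it is exactly here that one must use that $C_0$ is chosen generically, or pass to a further degeneration in which the pencils are forced into sufficiently general position, as in \cite[Proof of Prop.\ 7]{V1}. Voisin's count $\dim K_{k-1,1}(C_0,\omega_{C_0})=\binom{2k-1}{k-2}$ \cite{V1} guarantees that the target has the expected dimension, so once the spanning is established at $C_0$ it propagates to the general curve by the openness and irreducibility noted above, yielding Theorem \ref{main-cor}.
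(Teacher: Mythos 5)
Your proposal follows essentially the same route as the paper: restrict from the K3 surface via the hyperplane-section isomorphism $K_{k-1,1}(X,L)\simeq K_{k-1,1}(C,\omega_C)$, identify the pencil-adapted syzygies $\alpha(s)$ for $Z(s)\subseteq C$ with elements of $K_{k-1,1}(C,\omega_C,H^0(\omega_C\otimes A^{\vee}))$, and reduce the spanning to showing that the subspaces $\rm{Sym}^{k-2}H^0(A)^{\vee}$, $A\in W^1_{k+1}(C)$, generate $\rm{Sym}^{k-2}H^0(E)$. The one step you flag as the ``main obstacle'' is precisely the statement the paper imports ready-made from \cite[Proof of Prop.\ 7]{V1} (as you correctly guessed), so it is an available input rather than a gap, and your additional opening remarks on openness and irreducibility of moduli only make explicit what the paper leaves implicit in ``general curve.''
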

\begin{proof}
Let $(X,L)$ be as in Theorem \ref{geometric-syz}, and let $C \in |L|$ be general. We have a surjective, finite morphism
$$d: \rm{Gr}_2(H^0(E)) \to |L|$$
which takes a two dimensional subspace $W \seq H^0(E)$ to the degeneracy locus of the evaluation morphism $W \otimes \mathcal{O}_X \xrightarrow{ev} E$, \cite{V1}.\medskip

 We may naturally identify the cokernel of $ev$ with $K_C \otimes A$, where $C=d(W) \in |L|$ and $A \in W^1_{k+1}(C)$, \cite{lazarsfeld-BNP}, \cite{aprodu-farkas}. If $C$ is sufficiently general, then $d^{-1}(C) \simeq W^1_{k+1}(C)$ is reduced, where $W^1_{k+1}(C)$ is the Brill--Noether locus of minimal pencils on $C$. Under the identification $d^{-1}(C) \simeq W^1_{k+1}(C)$, a line bundle $A \in W^1_{k+1}(C)$ is mapped to $H^0(A)^{\vee}\seq H^0(E)$. \medskip

 By \cite[Proof of Prop.\ 7]{V1}, the spaces $\rm{Sym}^{k-2}H^0(A)^{\vee}$, $A \in W^1_{k+1}(C)$ generate $\rm{Sym}^{k-2}H^0(E)$. More geometrically, each such $H^0(A)^{\vee}$ corresponds to a line $T_i \seq \PP(H^0(E))$. Set $$T:=\bigcup_{A \in W^1_{k+1}(C)} T_i$$ to be the union of these lines. Thus the image of $T$ under $$\widetilde{\psi}: \PP(H^0(E)) \to \PP(K_{k-1,1}(X,L))$$ is non-degenerate. Hence $K_{k-1,1}(X,L)$ is generated by the subspaces $K_{k-1,1}(X,L, H^0(L \otimes I_s))$, where $s \in H^0(E)$ is a section corresponding to a $g^1_{k+1}$ on the fixed curve $C$. But such spaces are identified with subspaces of the form $$K_{k-1,1}(C,\omega_C, H^0(\omega_C \otimes A^{\vee})) \seq K_{k-1,1}(C,\omega_C),\; \; A \in W^1_{k+1}(C)$$ under the isomorphism $K_{k-1,1}(X,L) \simeq K_{k_1,1}(C, \omega_C)$, \cite{green-koszul}.
\end{proof}

\end{document}